\newtheorem{theorem}{Theorem}
\newtheorem{proposition}{Proposition}
\theoremstyle{remark}
\newtheorem*{remark}{Remark}
\begin{document}
\title[Approximation algorithm for RCPSP with NPV objective]{Approximation algorithm for resource-constrained project scheduling problems with net present value objective}\thanks{Supported by grants Fondecyt 1200809 and FONDEF ID19I10164 from ANID.}
\author{Rodrigo A. Carrasco}
\address{Institute of Mathematical and Computational Engineering \& School of Engineering, Pontificia Universidad Cat\'olica de Chile, Santiago, Chile}
\email{rax@ing.puc.cl}
\author{Diego Fuentes}
\address{Faculty of Engineering and Sciences, Universidad Adolfo Ibáñez, Santiago, Chile}
\email{diegfuentes@alumnos.uai.cl}
\author{Eduardo Moreno}
\address{Faculty of Engineering and Sciences, Universidad Adolfo Ibáñez, Santiago, Chile}
\email{eduardo.moreno@uai.cl}
\keywords{Project scheduling, Integer programming, Approximation algorithms, OR in natural resources}

\date{}


\begin{abstract}
    Resource-constrained project scheduling problems (RCPSP) are at the heart of many production planning problems across a plethora of applications. Although the problem has been studied since the early 1960s, most developments and test instances are limited to problems with less than 300 jobs, far from the thousands present in real-life scenarios. Furthermore, the RCPSP with discounted cost (DC) is critical in many of these settings, which require decision makers to evaluate the net present value of the finished tasks, but the non-linear cost function makes the problem harder to solve or analyze.
    
    In this work, we propose a novel approximation algorithm for the RCPSP-DC. Our main contribution is that, through the use of geometrically increasing intervals, we can construct an approximation algorithm, keeping track of precedence constraints, usage of multiple resources, and time requirements. To our knowledge, this is the first approximation algorithm for this problem. Finally, through experimental analysis over real instances, we report the empirical performance of our approach, showing that our technique allows us to solve sizeable underground mining problems within reasonable time frames and gaps much smaller than the theoretically computed ones.
\end{abstract}
\maketitle 

%


\section{Introduction}

Resource-constrained project scheduling problems (RCPSP) are one of the flagship problems in the scheduling literature. RCPSP consist of scheduling a set of jobs over time, considering precedences between them, and subject to resource consumption limits per time period. The precedence constraints refer to subsets of jobs that must be finished before starting a given job. Resource constraints consider a set of resources consumed during each job's processing time, with a given resource limit per time period. In the classic version of RCPSP, the objective function is to minimize the makespan, that is, the total time required to complete all jobs. We refer the reader to~\citet{HARTMANN20101} for a comprehensive survey of RCPSP and its different variants and extensions and its recent update  at~\citet{HARTMANN20221}.

This work considers the common variant of RCPSP with discounted costs (RCPSP-DC), in which the objective function is the net present value (NPV) of the processed tasks or jobs. This objective is required in many applications, and particularly it is the one required for mining operations. In this setting, each job has a profit, and the objective is to maximize the NPV of the scheduled jobs. More precisely, the nominal profit of each job is discounted over time by a \emph{discount factor} that penalizes future revenues. Note that in the presence of negative profits (which represent costs), not all jobs need to be scheduled to maximize the NPV. 

In this paper, we study RCPSP with NPV objective with a large number of periods (thousands of time periods) and jobs (also of the order of thousands of jobs). A key insight from our approach is that aggregating time periods in a specific way reduces the problem size; however, the obtained algorithm allows us to derive a near-optimal solution to the original problem and prove its approximation bound. In particular, we study the idea of aggregating the time into geometrically increasing time intervals. The intuition behind our approach is that when NPV is present as the objective function, changes in the order of jobs early in the schedule have a much more significant effect than changes in the ones at the end of the schedule. Hence, a higher time resolution is needed at the beginning of the problem, whereas by the end, one can relax the resolution requirements by aggregating more time steps together. Through this procedure, the time aggregation compensates for the effect of the discount rate on the NPV objective, reducing the number of required time periods, but without losing the tractability of the approximation factor.

Furthermore, under some reasonable assumptions for the intended application, we show that this aggregation allows constructing a $\gamma$-approximation of the problem, where $\gamma$ depends on the discount factor and the time horizon but not on the granularity of the time of the problem. To our knowledge, this is the first approximation algorithm for the RCPSP-DC with a known performance guarantee. Moreover, we show that this approximation algorithm performs very well on classic and large instances of RCPSP-DC problems, particularly on instances from underground mine planning.

\subsection{Previous work on RCPSP-DC}


Mixed integer programming (MIP) formulations for RCPSP with NPV objectives appear in the literature of many applications and under different names. In the context of RCPSP problems, the first formal definition appears in \citet{vanhoucke2001maximizing} and is later extended in \citet{vanhoucke2010scatter}, where a local search metaheuristic is proposed to solve the problem. Motivated by the difficulty of solving this problem, most recent developments for this setting are based on metaheuristics. For example, \citet{thiruvady2019maximising} proposes a hybrid ant-colony optimization method that exploits the parallelism of current computational architecture. More recently, \citet{asadujjaman2021immune} and \citet{asadujjaman2022multi} use immune genetic algorithms to solve larger instances, with up to 100 jobs per instance. Other authors have focused on different versions of the RCPSP-DC, particularly for the multi-mode variant where a job has different modes to process, where each mode implies different processing times or resource requirements. For example \citet{machado2022adaptative} proposed a new metaheuristic called ABFO that can solve PSPlib instances (\citep{kolisch1997psplib}) outperforming the genetic algorithm proposed by \citet{leyman2015new} and \citet{leyman2016payment}. Other metaheuristic methods have been proposed for multi-objective multi-mode RCPSP-DC~\citep{bagherinejad2019bi, azimi2021simulation}. In most of these cases, the instances solved for this problem consider only hundreds of jobs scheduled over a hundred periods, with between 1 to 10 resources for each instance. The main drawback with these approaches is that, in real-life mining applications, time intervals and jobs are on the order of thousands or tens of thousands, so new approaches are needed to compute useful solutions.

Additionally, this setting is relevant since RCPSP with NPV objective has become the standard model for long-term planning in the mining industry. Starting from the pioneering work of~\citet{johnson68}, most of the current software and computational tools for {\bf open-pine mine planning} are based on solving variations of RCPSP with NPV objective~\citep{newman09}. In these problems, the number of jobs (blocks to be extracted and processed) goes from thousand to millions, which should be scheduled over dozens of periods (years). However, these jobs do not have a predefined processing time; they consume resources at the scheduled time, and the total number of resources required defines the processing time. Solving problems of this size is only possible through specific decomposition methods, like \citet{chicoisne12} or the BZ Algorithm~\citep{bienstock10, munoz2018}, that solves the LP relaxation of this problem. Combining these methods plus ad-hoc MIP techniques and heuristics for this problem allows for obtaining a near-optimal solution (with optimality gaps $<5\%$) for these problems in a few hours~\citep{rivera2019}. Most of these ideas are available in commercial open-pit mine planning software, like \citet{minemax2017} or \citet{deswik2020}. 

One of the motivations of this paper is to repeat the success story of open-pit problems in underground mine planning. In this setting, the resulting RCPSP-DC problems have fewer jobs but longer processing times and are scheduled at a finer time granularity (days instead of years). Due to this time-resolution requirement, the size of the problems that current approaches can solve is limited to the size of the classical instances of RCPSP-DC~\citep{chowdu2022operations}. In contrast, our approach is focused on solving RCPSP-DC problems over thousand of time periods. To achieve this, instead of aggregating jobs (as in the BZ Algorithm), we aggregate time periods to reduce the size of the problem. A similar idea has been studied by~\citet{hill2022} in the context of underground mining, but considering an arithmetic time aggregation (that is, regular intervals of time of a fixed length). Recent examples of RCPSP-DC models for underground mining are \citet{ogunmodede2022underground} and \citet{nesbitt2021underground}.

Regarding the resources in the RCPSP instances, several different relevant settings exist. The most studied one, known as renewable resources, is when each resource $k$ has a limited constant availability $B_k$ for each time step. Hence, at each time step, this maximum resource availability $B_k$ also limits the number of jobs that can be processed in parallel. Another relevant resource availability setting is the case with {\em cumulative resources}, also known as {\em inventory constraints} (see \citet{Neumann2003} and the references therein). In this setting, there is an availability $B_k$ of resource $k$ for each time period, but the resources that are not used can be stored for the next period, hence accumulating them. This setting is also very relevant in many applications where an inventory of resources is possible, as described in \citet{Chaleshtarti2011}. This work shows theoretical results for the cumulative case, extending the application to the more studied RCPSP with renewable resources.

This paper is focused on optimization methods that guarantee a near-optimal solution for the problem. We propose a geometric time aggregation, where the length of the time intervals grows exponentially to compensate for the discount factor's effect over time, giving each interval a similar weight in terms of the objective function of the problem. The idea of geometric time intervals has been previously studied by~\citet{Carrasco2018:rcasEjor} in the context of single-machine scheduling problems with precedence constraints, where the \emph{speed} of the machine can be adjusted to minimize the weighted completion time of all jobs, but without the requirement of resource constraints as in our current setting, nor the NPV cost function required in our applications.

\subsection{Our contributions}

We provide several contributions to the problem of RCPSP-DC, particularly for large-scale instances of this problem.

\begin{itemize}
    \item We provide a new MIP formulation of the problem that uses a geometric time aggregation. This formulation significantly reduces the model's size, controlling the induced error and providing an upper bound for the problem's optimal value.
    \item We present an approximation algorithm based on solving this MIP formulation and reconstructing a feasible solution for the original problem.
    \item We prove that this algorithm has a bounded performance guarantee under mild assumptions, which depends on the time horizon and the discount factor of the problem but not on its time granularity. Furthermore, this factor is good enough for real instances of the problem ($>0.75$ for problems with a horizon of 2 years and a 10\% annual discount rate).
    \item We extend these results for general RCPSP-DC problems and provide computational experiments to show the excellent performance of the algorithm on classic and large instances of RCPSP-DC problems, particularly on instances from underground mining.
\end{itemize}

The rest of the paper is structured as follows. First, Section~\ref{sec:defs} presents the RCPSP-DC and the notation used along the paper. Then, in Section~\ref{sec:agg}, we show the geometric time aggregation and derive new MIP formulations for the problem based on it. Then, in Section~\ref{sec:algo}, we describe the resulting approximation algorithm and prove its approximation factor. Finally, in Section~\ref{sec:comput}, we present computational results applying the algorithm on benchmark instances.

\section{Problem definition and model formulation}\label{sec:defs}

We now define the notation used in this paper and present a formal definition of the problem.

\subsection{Notation and problem definition}

Let $\mathcal{J} = \{1,\ldots, N\}$ denote the set of jobs to be scheduled along a set $\mathcal{T} = \{1,\ldots, T\}$ of discrete time periods. We also have a set of resources $\mathcal{K} = \{1,\ldots, K\}$ required to realize the different jobs. Each job $j\in\mathcal{J}$ has associated: (1) a \emph{revenue} $f_j\in \mathbb{R}$ which is recollected after \underline{finishing} the job (note that this revenue can be positive or negative); (2) a processing time $p_j \in\mathbb{N}$ which is the number of time periods required to finish the job $j$; (3) an amount of resource $q_{jk}$ of type $k\in\mathcal{K}$ required \underline{at each period} where the job $j$ is active to complete the job; and (4) a set of precedence jobs $\mathcal{P}_j\subset \mathcal{J}$ that must be finished before starting job $j$. We also denote by $R_{kt}$ the amount of (fresh) resources of type $k$ available at time $t$.

Henceforth, a valid schedule for the problem consists of deciding which jobs to process and in which time periods they should be scheduled, so the precedence constraints between jobs and the resource constraints at each time period are satisfied. Furthermore, we assume that the scheduling is non-preemptive (that is, a job being processed cannot be interrupted).

We can compute its total revenue by giving a valid schedule for the problem. This value is given by the net present value of the jobs finished. Let $C_j$ be the completion time of job $j$ (assume $C_j=-\infty$ if the job is not scheduled). Then, the net present value (NPV) of the schedule is given by 
\begin{equation}
    \sum_{j\in \mathcal{J} : C_j\geq 0} \frac{f_j}{(1+r)^{C_j}} \label{eq:npv}
\end{equation}
where $r$ is the discount rate at each time period. Our problem is to find a feasible schedule that maximizes the NPV of the processed jobs.

The net present value is prevalent for project evaluation. Note that a job with negative revenue will not be scheduled under this objective function unless it is required to finish other jobs with higher revenue to compensate for this negative cost. Also, an optimal solution will try to complete jobs with positive revenue as soon as possible and postpone the jobs with negative revenue for later. 

\paragraph{A note on precedence sets}: Given the set of precedences $\mathcal{P} := \{\mathcal{P}_j\}_{j\in\mathcal{J}}$, we can represent this set has a directed graph $G=(\mathcal{J},\mathcal{A})$ where an arc $(j,k)\in\mathcal{A}$ iff $k\in \mathcal{P}_j$.  Note that $G$ is a directed acyclic graph. For a given node (job) $j$, its \emph{transitive closure} is the set of all nodes reachable from $j$ over $G$. That is the set of all jobs required to be finished before starting $j$. We denote this set as $\hat{\mathcal{P}_j}$. Similarly, the \emph{transitive reduction} of $G$ is the minimal set of arcs $\breve{\mathcal{A}}$ such that its transitive closure is equal to the transitive closure of $\mathcal{A}$. We denote by $\breve{\mathcal{P}_j}$ the set of precedences for job $j$ defined by $\breve{\mathcal{A}}$.  Note that in our problem, it is equivalent to use $\mathcal{P}_j$, $\breve{\mathcal{P}}_j$ or $\hat{\mathcal{P}}_j$ as precedence between jobs. Hence, for simplicity and to reduce the size of the problem, for the rest of this work, we assume that the set of precedences of the problem is minimal, that is, $\mathcal{P}_j=\breve{\mathcal{P}_j}$ for all job $j\in\mathcal{J}$.


\subsection{A MIP formulation for the problem}

We start with a \emph{time-indexed} formulation for RCPSP-DC.  Let $x_{jt}\in\{0,1\}$ be a binary variable such that $x_{jt}=1$ indicates that job $j$ is finished at time period $t$. In other words, the job starts at time $t-p_j+1$ and consumes resources during $p_j$ time periods: $[t-p_j+1,\dots ,t]$. Using these binary variables, the problem can be formulated as follows:

\begin{subequations}
\begin{align}
    \max &\ \textrm{NPV}(x) \\
    x_{jt} &= 0 &  t < p_j,  j\in\mathcal{J}\label{eq:Orig1}\\
    \sum_{t=1}^{T} x_{jt} &\leq 1  &  j\in\mathcal{J} \label{eq:OrigOnce}\\
    \sum_{u=1}^t x_{ju} &\leq \sum_{u=1}^{t-p_j} x_{ku} &  \forall k\in\mathcal{P}_j,   j\in\mathcal{J},  t\in\mathcal{T} \label{eq:OrigPrec}\\
    \sum_{j=1}^{N} \sum_{u=1}^{t+p_j-1} q_{jk} \cdot (p_j - (u-t)^+)\cdot  x_{ju} &\leq \sum_{u=1}^t R_{ku} & t\in\mathcal{T} , k\in\mathcal{K} \label{eq:OrigResources}\\
    x_{jt} &\in \{0,1\} &  j\in\mathcal{J},  t\in\mathcal{T} \label{eq:prec_at}
\end{align} \label{eq:Orig}
\end{subequations}

The objective function maximizes the NPV of the schedule, which is given by the expression,
\[ \textrm{NPV}(x) := \sum_{j=1}^{N}\sum_{t=1}^{T}  \frac{f_{j}}{(1+r)^t} x_{jt} \] 

Constraint~\eqref{eq:Orig1} indicates that a job cannot finish before its processing time. Constraint~\eqref{eq:OrigOnce} forces that each job is processed at most one time. Precedence constraint~\eqref{eq:OrigPrec} states that if a job $j$  finishes at time $t$ (or before), then its precedence jobs $k\in\mathcal{P}_j$ must finish at time $t-p_j$ or before. Note that this constraint can also be formulated as $x_{jt} \leq \sum_{u=1}^{t-p_j} x_{ku}$, but \eqref{eq:OrigPrec} provides a stronger formulation for the problem~\citep{lambert2014open}. Finally, constraint~\eqref{eq:OrigResources} provides the resource constraints at each period. In this case, we assume cumulative resources, that is, unused resources at the end of each time period are available for the next period in addition to the new resources received. In Section~\ref{sec:algo}, we discuss the variant where these resources are renewable but not cumulative. Refer to Figure~\ref{fig:explainResourceConstraint} to better understand this equation. The left-hand side of~\eqref{eq:OrigResources} is the total resources of type $k\in\mathcal{K}$ consumed until time $t$, which includes all jobs finished at time $t$ or before ($u\leq t$, in which case a job $j$ consumes  $q_{jk}$ resources during $p_j$ time periods) plus all jobs started before time $t$ and not yet finished ($u>t$, in which case a job $j$ consumes $q_{jk}$ resources during only $p_j - (u-t)$ periods). 

\begin{figure}[tbp]
\begin{center}
    \includegraphics[width=.5\linewidth]{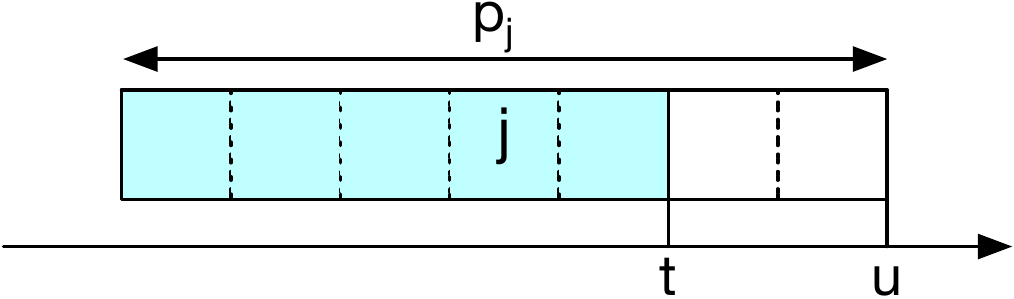}
    \caption{Example of a job $j$ finished at time $u$ ($x_{ju}=1$) which consumes $q_{jk}$ resources of type $k$ along $(p_j-(u-t))$ time periods.}\label{fig:explainResourceConstraint}
\end{center}
\end{figure}

We remark that a usual alternative formulation can be obtained by replacing the interpretation of the decision variable and considering a new binary variable $z_{jt}$ that indicates that a job $j$ has been finished ``by'' time $t$. This reformulation of the problem generates (many) precedence constraints of the form $z_{jt} \leq z_{j't'}$,  plus (a few) resource constraints, which is a structure suitable for decomposition algorithms like the BZ algorithm~\citep{bienstock10}. To improve the clarity of the models, in the following, we consider the original ``at'' formulation presented before, but in~\ref{sec:by}, we describe this alternative formulation.

\section{Geometric time-aggregation} \label{sec:agg}

As previously indicated, we propose a geometric time aggregation for the RCPSP-DC problem. That is, given an \emph{aggregation parameter} $\epsilon$, we re-define the time periods of the problem considering a new set of time \emph{intervals} $I_i = ]\tau_{i-1}, \tau_i]$ where $\tau_i=(1+\epsilon)^i$ for $i\geq 0$ and $I_0=[1]$. For example, if $\epsilon=1$ then $I_0=[1], I_1=]1,2], I_2=]2,4], \ldots, I_s=]2^{s-1},2^s]$.

Let $\mathcal{I}(t)$ be the function that indicates the interval associated to a given time $t\in\mathbb{R}$, that is, $\mathcal{I}(t) = i$ $\Leftrightarrow$ $t\in I_i$. Note that $\mathcal{I}(t)$ can be defined as the function $\mathcal{I}(t):= \lceil\log_{1+\epsilon}(t)\rceil$. We also assume that $\mathcal{I}(t) = 0$ for $t<1$. Finally, we denote by $T_I=\mathcal{I}(T)$  the number of intervals required induced by $T$, and by $\mathcal{T}_I = \{1,\ldots, T_I\}$ the set of intervals of the problem.

Using these newly defined time intervals, we formulate a MIP problem similar to~\eqref{eq:Orig}. Similarly, we define a binary variable $X_{js}\in\{0,1\}$ such that $X_{js}=1$ if job $j$ is finished during interval $I_s$.  However, we need to provide some assumptions on how to interpret this schedule.

One of the difficulties in writing this problem is the non-uniform time intervals affecting the precedences and resource constraints. Since time intervals are no longer uniformly sized, the number of intervals to consider when rewriting these constraints will depend on the current interval. For example, on the original formulation, if $k\in \mathcal{P}_i$ and we schedule job $i$ to finish at time $t$, then job $k$ must finish at time $t-p_i$ or before; this rule does not depend on the specific value of $t$. With non-uniform time intervals, this is no longer true, and the precedence between jobs $j$ and $k$ can consider several time intervals between them, or they can even be scheduled on the same interval. 

For the resource constraints, we interpret that if $X_{js}=1$, job $j$ is finished at time $\tau^s$. Hence, an equivalent formulation of constraint~\eqref{eq:OrigResources} can be written as,
\begin{equation*}
        \sum_{j=1}^{N} \sum_{u=1}^{\mathcal{I}(\tau_t+p_j)-1} q_{jk} \cdot (p_j - (\tau_u-\tau_t)^+)\cdot  X_{ju} \leq \sum_{s=1}^{\lceil\tau_t\rceil}R_{ks} \qquad  t\in\mathcal{T}_I , k\in\mathcal{K}. 
\end{equation*}

\begin{figure}[tbp]
\begin{center}
    \includegraphics[width=.5\linewidth]{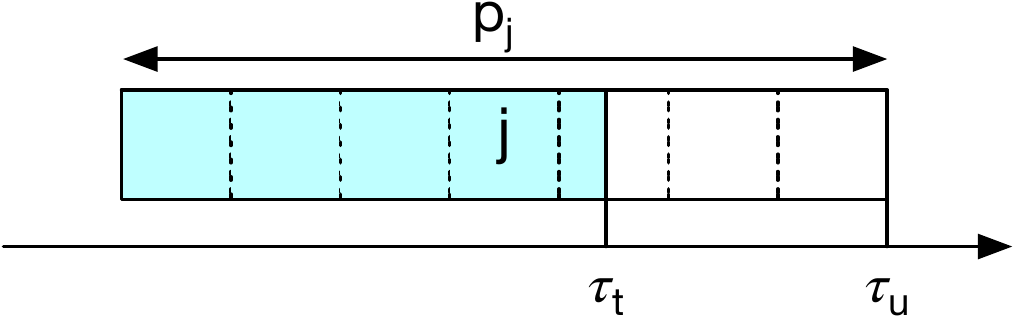}
    \caption{Equivalent to Figure~\ref{fig:explainResourceConstraint} for geometric time intervals}\label{fig:explainResourceConstraintAgg}
\end{center}
\end{figure}

The interpretation of these constraints is similar to before. The consumed resources at time $\tau_t$ consider the jobs finished before that time, and the jobs started before $\tau_t$ and not yet finished (see Figure~\ref{fig:explainResourceConstraintAgg}). For the right-hand side of the constraint, we assume that the cumulative available resources at the end of interval $I_t$ are given by the resources available at time $s\leq \lceil\tau_t\rceil$.

For the precedence constraints, we can assume that if a job $j$ is scheduled to finish at interval $s$, then its precedence jobs $k\in\mathcal{P}_j$ must finish at time $\mathcal{I}(\tau_s-p_j)$ or before. Hence, the number of intervals required between $j$ and $k$ will depend on the scheduled interval of job $j$. Moreover, for longer intervals (where $\tau_s-\tau_{s-1}>p_j$), jobs $k$ and $s$ could be scheduled in the same interval without violating its precedence constraint. However, this shows that we can no longer rely on the transitivity of the precedence relationship to formulate these constraints. In fact, in the latter example, suppose that a job~$l$ is in $\mathcal{P}_k$ and $p_k < \tau_s-\tau_{s-1}$ but $p_j+p_k > \tau_s-\tau_{s-1}$. In this case, the precedences between $j$ and $k$ and between $k$ and $l$ individually indicates that both pairs of jobs can finish on the same interval $s$, but this is not possible for all three because $p_j+p_k > \tau_s-\tau_{s-1}$.

To deal with this issue, we use the graph representation of the precedence constraints. Let $G=(\mathcal{J},\mathcal{A})$ be the graph of precedences between jobs, and let associate a length $w_{jk}:=p_j$ to each arc $jk\in \mathcal{A}$. For each job $j\in\mathcal{J}$ and for each job $i$ in the transitive closure of node $j$ (that is, a reachable node from $j$), we define $\Delta_{ji}$ as the length of the longest path from $j$ to $i$ over $G$. In other words, $\Delta_{ji}$ is the minimum time span induced by the original precedences between the finishing times of jobs $i$ and $j$. Using these coefficients, we can rewrite the analogous precedence constraints~\eqref{eq:OrigPrec} for geometric intervals as
\[ 
    \sum_{u=1}^s X_{j,u} \leq \sum_{u=1}^{\mathcal{I}(\tau_{s}-\Delta_{jk})} X_{k,u} \qquad  k\in\hat{\mathcal{P}}_j, j\in\mathcal{J}, s\in\mathcal{T}_I.
\]    
Note that if $\tau_{s} < \Delta_{jk}$ then $X_{j,u}=0$ for all $u\leq s$.
    
Finally, for the objective function, we assume that for jobs with positive profit ($f_i>0$) that are scheduled, the resulting profit is obtained \underline{at the beginning} of the interval. For jobs with a negative coefficient ($f_i<0$), this loss is obtained \underline{at the end} of the interval. This modeling decision is intended to obtain an upper bound for the original problem, but equivalent results can be obtained under other assumptions. Hence, we denote the objective function of a given schedule $X$ as 
\[ \widehat{\textrm{NPV}}(X) := \sum_{\substack{j=1\\f_j>0}}^{N}\sum_{s=1}^{T_I}  \frac{f_{j}}{(1+r)^{\tau_{s-1}}}  X_{js} + \sum_{\substack{j=1\\f_j<0}}^{N}\sum_{s=1}^{T_I}  \frac{f_{j}}{(1+r)^{\tau_{s}}}  X_{js}.\]

In summary, the resulting MIP formulation for the problem with geometric time intervals is given by
\begin{subequations}
\begin{align}
\max &\  \widehat{\textrm{NPV}}(X),
\end{align}
subject to the following constraints,
\begin{align}
    X_{jt} &= 0 &  t < \mathcal{I}(p_j),  j\in\mathcal{J},\label{eq:Aggelim}\\
    \sum_{t=1}^{T_I} X_{jt} &\leq 1  &  j\in\mathcal{J},\\
    \sum_{u=1}^t X_{ju} &\leq \sum_{u=1}^{\mathcal{I}(\tau_{t}-\Delta_{jk})} X_{ku} & k\in\hat{\mathcal{P}}_j, j\in\mathcal{J}, t\in\mathcal{T}_I, \label{eq:Aggprec_at}\\
    \sum_{j=1}^{N} \sum_{u=1}^{\mathcal{I}(\tau_t+p_j)-1} q_{jk}  (p_j -& (\tau_u-\tau_t)^+)  X_{ju} \leq  \sum_{s=1}^{\lceil\tau_t\rceil}R_{ks} & t\in\mathcal{T}_I , k\in\mathcal{K},\label{eq:AggResources}\\
    X_{jt} &\in \{0,1\} &  j\in\mathcal{J},  t\in\mathcal{T}_I.
\end{align} \label{eq:Agg}
\end{subequations}

Note that this formulation can also be reformulated using the ``by`` variables (as in~\ref{sec:by}), obtaining the required structure to use decomposition algorithms like the BZ algorithm.


The following proposition shows that any feasible solution of Problem~\eqref{eq:Orig} can be transformed into a feasible solution of Problem~\eqref{eq:Agg}, with a higher objective value.

\begin{proposition}\label{prop:UB}
Let $x^*$ be a feasible solution for Problem~\eqref{eq:Orig}, and let be $X^*\in\{0,1\}^{\mathcal{J}\times \mathcal{T}_I}$ such that if $x^*_{j,t}=1$ then $X^*_{j,\mathcal{I}(t)}=1$ for each $j\in\mathcal{J}$, and $X^*_{js}=0$ for $s\in \mathcal{T}_I$, $s\neq \mathcal{I}(t)$. Therefore, $X^*$ is a feasible solution for Problem~\eqref{eq:Agg}, and $\textrm{NPV}(x^*) \leq \widehat{\textrm{NPV}}(X^*)$.
\end{proposition}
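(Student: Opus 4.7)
The plan is to verify each constraint of~\eqref{eq:Agg} for the mapped solution $X^*$ and then compare the two objective values term by term. The construction is essentially a projection: each completion time $t$ used by $x^*$ is replaced by its containing interval index $\mathcal{I}(t)$, so the support of $X^*$ is obtained from that of $x^*$ via the monotone non-decreasing map $\mathcal{I}$.

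The simple constraints come first. Since $x^*_{j,t}=1$ forces $t\geq p_j$, monotonicity of $\mathcal{I}$ gives $\mathcal{I}(t)\geq \mathcal{I}(p_j)$, which is~\eqref{eq:Aggelim}. The at-most-once constraint is inherited directly from $x^*$ since $\mathcal{I}$ is a function. For the precedence constraint~\eqref{eq:Aggprec_at}, if $\sum_{u\leq s}X^*_{ju}=1$, then $x^*_{j,C_j}=1$ for some $C_j$ with $\mathcal{I}(C_j)\leq s$, hence $C_j\leq \tau_s$. For any $k\in\hat{\mathcal{P}}_j$, applying the original precedences along a longest $j$-to-$k$ path in $G$ forces $C_k\leq C_j-\Delta_{jk}\leq \tau_s-\Delta_{jk}$, so $\mathcal{I}(C_k)\leq \mathcal{I}(\tau_s-\Delta_{jk})$ and the right-hand side of~\eqref{eq:Aggprec_at} is at least $1$.

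The main technical step is the resource constraint~\eqref{eq:AggResources}, and this is where I expect the bookkeeping to be the main obstacle. I would fix $t\in\mathcal{T}_I$ and $k\in\mathcal{K}$ and argue that the aggregated LHS at interval $t$ is bounded, contribution by contribution, by the original LHS at the integer time $\lceil\tau_t\rceil$. For a job $j$ with $x^*_{j,C_j}=1$ and $u=\mathcal{I}(C_j)$, when $u\leq t$ both coefficients equal $q_{jk}p_j$ because $C_j\leq \tau_u\leq \tau_t\leq \lceil\tau_t\rceil$; when $u>t$ and $u\leq\mathcal{I}(\tau_t+p_j)-1$, the aggregated coefficient $q_{jk}(p_j-(\tau_u-\tau_t))$ is at most the original coefficient $q_{jk}(p_j-(C_j-\lceil\tau_t\rceil)^+)$ because $C_j\leq\tau_u$ and $\tau_t\leq\lceil\tau_t\rceil$; for larger $u$ the aggregated contribution vanishes by construction while the original is non-negative. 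Summing over $j$ and invoking feasibility of $x^*$ at time $\lceil\tau_t\rceil$ gives the aggregated inequality, since by construction its RHS $\sum_{s\leq\lceil\tau_t\rceil}R_{ks}$ coincides with that of the original constraint at time $\lceil\tau_t\rceil$.

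Finally, for the objective I would compare contributions on the support of $x^*$. For a job $j$ with completion $C_j$ and $u=\mathcal{I}(C_j)$, if $f_j>0$ the aggregated contribution $f_j(1+r)^{-\tau_{u-1}}$ dominates $f_j(1+r)^{-C_j}$ because $\tau_{u-1}<C_j$, while if $f_j<0$ the aggregated contribution $f_j(1+r)^{-\tau_u}$ dominates the original because $C_j\leq\tau_u$ and dividing a negative number by a larger positive denominator increases it. Summing yields $\textrm{NPV}(x^*)\leq \widehat{\textrm{NPV}}(X^*)$. Beyond the coefficient-wise accounting for the resource inequality, the only other subtle point is the degenerate interval $I_0=[1]$, where $\tau_{-1}$ should be interpreted (e.g.\ as any value $\leq 1$) so that the $f_j>0$ comparison at $C_j=1$ still goes through.
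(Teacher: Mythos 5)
Your proposal is correct and follows essentially the same route as the paper's proof: map completion times through $\mathcal{I}$, chain the original precedences along a longest path to exploit $\Delta_{jk}$, dominate the aggregated resource constraint at interval $t$ by the original constraint at time $\lceil\tau_t\rceil$ (same right-hand side), and compare objective contributions term by term with $\tau_{s-1}$ for positive and $\tau_s$ for negative profits. Your per-job coefficient-wise comparison in the resource step is a slightly cleaner bookkeeping that subsumes the paper's explicit case split on whether $\tau_s$ is integral, but it is the same underlying inequality.
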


\begin{proof}
We focus on precedence and resource constraints to prove the feasibility of $X^*$ because all other constraints are trivially satisfied. 

Let $j\in\mathcal{J}$ and let $k\in\hat{\mathcal{P}}_j$. By definition of $\Delta_{jk}$, if $x^*_{jt}=1$ then $x^*_{ks}=1$ for some $s\leq t-\Delta_{jk}$. Therefore,  $X^*_{j,\mathcal{I}(t)}=1$ and $X^*_{k,s'}=1$ for some  $s'\leq \mathcal{I}(t-\Delta_{jk})$. Since $t\leq\tau_{\mathcal{I}(t)}$ then precedence constraints~\eqref{eq:Aggprec_at} are satisfied. 

For the resource constraints, let $s\in\mathcal{T}_I$ and let $t'=\lceil\tau_s\rceil$. We first assume that $t'>\tau_s$. 
Since $x^*$ is feasible for Problem~\eqref{eq:Orig} it satisfies
\[ \sum_{j=1}^{N} \sum_{u=1}^{t'-1} q_{jk} \cdot p_j \cdot  x^*_{ju} + \sum_{u=t'}^{t'+p_j-1} q_{jk} \cdot (p_j - (u-t')^+)\cdot  x^*_{ju} \leq  \sum_{u=1}^{t'} R_{ku}.\]

On the one hand, note that 
\begin{align*}
\sum_{j=1}^{N} \sum_{u=1}^{t'-1} q_{jk} \cdot p_j \cdot  x^*_{ju} &= \sum_{j=1}^{N} \sum_{u=1}^{s} q_{jk} \cdot p_j \cdot  X^*_{ju}, 
 \end{align*}
because if $X^*_{ju}=1$ for $u\leq s$ then $x^*_{ju}=1$ for $u\leq t'-1 \leq\tau_s < t'$.

On the other hand, 

\begin{align*}
    \sum_{u=t'}^{t'+p_j-1} q_{jk} \cdot (p_j - (u-t')^+)\cdot  x^*_{ju}  
    &\geq \sum_{u=t'}^{t'+p_j-1} q_{jk} \cdot (p_j - (\tau_{\mathcal{I}(u)}-\tau_s)^+)\cdot  x^*_{ju}, \\
    &= \sum_{u=\mathcal{I}(t')}^{\mathcal{I}(t'+p_j-1)} q_{jk} \cdot (p_j - (\tau_{u}-\tau_s)^+)\cdot  X^*_{ju}, \\
    &\geq \sum_{u=s+1}^{\mathcal{I}(\tau_s+p_j)-1} q_{jk} \cdot (p_j - (\tau_{u}-\tau_s)^+)\cdot  X^*_{ju}.
\end{align*}

The first inequality is obtained because $u\leq \tau_{\mathcal{I}(u)}$ and $t'>\tau_s$. For the last inequality, the lower limit of the sum can be replaced because if $\mathcal{I}(t')>s+1$ then $X^*_{ju}=0$ for $s+1\leq u < \mathcal{I}(t')-1$ for any job $j\in\mathcal{J}$. Also, the upper limit of the sum can be replaced because if $\mathcal{I}(t'+p_j-1)>\mathcal{I}(\tau_s+p_j)-1$ and since $\mathcal{I}(t'+p_j-1)\leq \mathcal{I}(\tau_s+p_j)$ then  $\mathcal{I}(t'+p_j-1)= \mathcal{I}(\tau_s+p_j)$. Hence job~$j$ will be finished at the end of the interval~$\mathcal{I}(\tau_s+p_j)$, not consuming resources on the interval $s$.

In the remaining case in which $\tau_s\in\mathcal{T}$ (so $t'=\tau_s$), the result can be obtained by the same arguments but considering the expression 
 \[ \sum_{j=1}^{N} \sum_{u=1}^{t'} q_{jk} \cdot p_j \cdot  x^*_{ju} + \sum_{u=t'+1}^{t'+p_j-1} q_{jk} \cdot (p_j - (u-t')^+)\cdot  x^*_{ju} \leq  \sum_{u=1}^{t'} R_{ku}. \]
Hence, constraints~\eqref{eq:AggResources} are satisfied by $X^*$. 

Finally, to prove the bound on the objective function, note that if $x^*_{jt}=1$ then the contribution to the objective value of Problem~\eqref{eq:Orig} is $f_i/(1+r)^{t}$, which is smaller than $f_i/(1+r)^{\tau_{\mathcal{I}(t)}}$ if $f_i<0$ and smaller than $f_i/(1+r)^{\tau_{\mathcal{I}(t)-1}}$ if $f_i>0$, so $\textrm{NPV}(x^*) \leq \widehat{\textrm{NPV}}(X^*)$.
\end{proof}

\section{An approximation algorithm for the RCPSP-DC} \label{sec:algo}

Problem~\eqref{eq:Agg} provides a geometric time-aggregated version of Problem~\eqref{eq:Orig}, with a reduced number of time intervals. Due to this, it is more appropriate to solve using standard MIP solvers. However, the resulting optimal solution needs to be transformed back to the domain of the original problem. This transformation can be easily done by applying a topological sorting algorithm over the graph of precedences and assigning the jobs to the earliest available time period of $\mathcal{T}$ provided by the solution of the aggregated problem. More precisely, a topological order of the jobs $\sigma(\mathcal{J})$ is an order of the jobs $\mathcal{J}$ such that each job has all its precedences before itself in the ordering. With this order, for each interval~$s$, we sequentially assign jobs $j$ with $X^*_{js}=1$ in the topological order to the earliest time $t>\tau_{s-1}$ with enough available resources to complete the job. See Algorithm~\ref{alg:sort} for more details.

\begin{algorithm}
 \caption{Interval Aggregation Approximation Algorithm }\label{alg:sort}
 \begin{algorithmic}
 \Require Aggregation parameter $\epsilon$
 \Ensure Finishing time $C_j$ for each job $j\in\mathcal{J}$
 \State $X^* \gets$ Optimal solution of Problem~\eqref{eq:Agg}  with parameter $\epsilon$
 \State $\sigma(\mathcal{J})\gets$ a topological order of $\mathcal{J}$ according to $\mathcal{P}$
 \ForAll{$s\in\mathcal{T}_I$}
    \ForAll{$j\in\sigma(\mathcal{J})$ such that $X^*_{j,s}=1$}
        \State $t \gets \max\{\lfloor\tau_{s-1}\rfloor+1, C_k+p_j \ \forall k\in\mathcal{P}_j\}$ 
        \While{not enough resources to assign job $j$ to $[t-p_j, t]$}
            \State $t \gets t+1$
        \EndWhile
        \State $C_j \gets t$   
    \EndFor
 \EndFor
 \end{algorithmic}
\end{algorithm}

It is easy to see that Algorithm~\ref{alg:sort} provides a feasible solution for Problem~\eqref{eq:Orig}, and by Proposition~\ref{prop:UB} its objective value is smaller than $\widehat{\textrm{NPV}}(X^*)$. However, this objective value could potentially be far away from the true optimal value of Problem~\eqref{eq:Orig}.  

In the following theorem, we provide an approximation bound on the quality of the obtained solution for the case that all profits are non-negative.
 
\begin{theorem}\label{thm:main} If $f_j\geq 0$ for all $j\in\mathcal{J}$, then Algorithm~\ref{alg:sort} is a $\gamma$-approximation algorithm for Problem~\eqref{eq:Orig}, where
\[\gamma := (1+r)^{-T\cdot \frac{2\epsilon}{1+\epsilon}}\]
\end{theorem}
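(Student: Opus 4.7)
The plan is to sandwich the optimum of Problem~\eqref{eq:Orig} between the value $\widehat{\textrm{NPV}}(X^*)$ delivered by the aggregated problem and the value $\textrm{NPV}(\hat{x})$ returned by Algorithm~\ref{alg:sort}. Proposition~\ref{prop:UB} already supplies the upper side: lifting any feasible solution of~\eqref{eq:Orig}, and in particular an optimal one, to a feasible solution of~\eqref{eq:Agg} shows that the optimum of~\eqref{eq:Orig} is at most $\widehat{\textrm{NPV}}(X^*)$. It therefore suffices to prove the lower side $\textrm{NPV}(\hat{x}) \geq \gamma\,\widehat{\textrm{NPV}}(X^*)$, and the theorem follows by concatenating the two inequalities.

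The central technical claim I would establish is a per-job completion-time bound: for every job $j$ with $X^*_{j,s}=1$, Algorithm~\ref{alg:sort} returns $C_j$ satisfying $C_j - \tau_{s-1} \leq \tfrac{2\epsilon}{1+\epsilon}\,\tau_{s}$ (equivalently, $C_j \leq (1+\epsilon)\,\tau_{s}$). I would prove this by induction on the interval index $s$ and, within each interval, on the topological order used by the algorithm. Two checks are needed in the inductive step. First, precedences: every ancestor $k\in\hat{\mathcal{P}}_j$ was placed by $X^*$ in some interval $s_k$ forced by constraint~\eqref{eq:Aggprec_at} to satisfy $\tau_{s_k}\leq \tau_s - \Delta_{jk}$ up to interval rounding, so the inductive hypothesis bounds the quantity $\max_{k\in\mathcal{P}_j}(C_k + p_j)$ that the algorithm uses as the lower endpoint for $t$. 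Second, resources: cumulative feasibility of $X^*$ in~\eqref{eq:AggResources} ensures that the total resource consumption of all jobs placed by the algorithm in intervals $\leq s$ does not exceed the fresh resources available by a time barely past $\tau_s$, so the \textbf{while} loop cannot push $t$ beyond the target window.

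Given the completion-time bound, the proof closes by a term-by-term comparison. Because $f_j\geq 0$ for every $j$, the ratio of the two values can be bounded job by job:
\[
\textrm{NPV}(\hat{x}) \;=\; \sum_{j} \frac{f_j}{(1+r)^{C_j}} \;\geq\; (1+r)^{-\max_j (C_j - \tau_{s_j-1})} \sum_{j} \frac{f_j}{(1+r)^{\tau_{s_j-1}}} \;=\; (1+r)^{-\max_j(C_j-\tau_{s_j-1})}\,\widehat{\textrm{NPV}}(X^*),
\]
where $s_j$ denotes the interval in which $X^*$ places job $j$. Plugging in the completion-time bound and using $\tau_{s_j}\leq T$, which is forced by the horizon whenever the job actually completes within it, gives $\max_j(C_j-\tau_{s_j-1}) \leq T\cdot\tfrac{2\epsilon}{1+\epsilon}$, which is exactly the exponent appearing in $\gamma$.

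The main obstacle is the inductive completion-time bound of the second paragraph. The delicate points are that the aggregated precedence constraints~\eqref{eq:Aggprec_at} are written in terms of the transitive closure and the longest-path distances $\Delta_{jk}$ rather than the immediate precedences used by the algorithm, and that the multi-resource cumulative constraint~\eqref{eq:AggResources} must remain compatible with the sequential, non-preemptive placement performed within each interval. The geometric interval structure is what allows both chains of effects to be absorbed into a single interval-wide slip, and is the reason the constant $\tfrac{2\epsilon}{1+\epsilon}$ is independent of the time granularity of the original problem.
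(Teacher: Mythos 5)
Your proposal is correct and takes essentially the same route as the paper's proof: the sandwich $\mathrm{OPT} \leq \widehat{\textrm{NPV}}(\hat{X}^*)$ via Proposition~\ref{prop:UB}, the worst-case one-interval slip bound $C_j \leq \tau_{s-1} + \tfrac{2\epsilon}{1+\epsilon}\tau_s = \tau_s\cdot\tfrac{1+2\epsilon}{1+\epsilon}$ for a job placed by $\hat{X}^*$ in interval $s$ (which the paper derives directly from cumulative resource availability at $\tau_s$ rather than via your explicit induction, at a comparable level of detail), and the term-by-term comparison using $f_j\geq 0$ with the exponent maximized at the horizon $T$. One minor slip: your parenthetical ``equivalently, $C_j \leq (1+\epsilon)\tau_s$'' is strictly weaker than, not equivalent to, the stated slip bound, but this is harmless since your closing computation uses the correct form.
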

 
\begin{proof}
Let $\hat{X}^*$ be the optimal solution of Problem~\eqref{eq:Agg},
let $x^*$ be the optimal solution of Problem~\eqref{eq:Orig}, and let $X^{[x^*]}$ be the solution constructed from $x^*$ as in Proposition~\ref{prop:UB}.
 
By  Proposition~\ref{prop:UB}, we now that 
\[ \text{OPT} := \text{NPV}(x^*) \leq  \widehat{\textrm{NPV}}(X^{[x^*]}) \leq  \widehat{\textrm{NPV}}(\hat{X}^*). \]
  
Let $\hat{x}$ be the solution obtained from Algorithm~\ref{alg:sort}, and define
\[ \text{ALG} := \text{NPV}(\hat{x}) = \sum_{j=1}^{N}\sum_{t=1}^{T}  \frac{f_{j}}{(1+r)^t} \hat{x}_{jt}. \] 
 

Note that for each job $j\in\mathcal{J}$, if $\hat{X}^*_{js}=1$ then Algorithm~\ref{alg:sort} assigns to $j$ a finishing time in a period $C_j$ which is greater than $\tau_{s-1}$. 
On the other hand, if $\hat{X}^*_{js}=1$ then $C_j$ cannot be greater than $\tau_{s+1}$. If $\hat{X}^*_{js}=1$, then there are enough available resources to schedule job $j$ and all its precedence jobs at the end of the interval $I_s$, that is, time $\tau_s$. However, this cannot ensure that $C_j\leq \tau_s$. Because when transforming the solution $\hat{X}^*$ to times periods, the available resources at times at the beginning of interval $I_s$ could not be enough to schedule job $j$ and all its precedent jobs. However, this does not occur on the next interval $I_{s+1}$ because we know that there are enough resources at the beginning of this interval, which is $\tau_{i}$. So, in the worst case, job $j$ will consume the resources of interval $I_{s+1}$ starting from time $\tau_s$, so it will finish at a time $C_j\leq \tau_s + (\tau_s-\tau_{s-1}) = \tau_s \cdot  \tfrac{1+2\epsilon}{1+\epsilon}$. 

Therefore, since $f_j\geq 0$ for all $j\in\mathcal{J}$, then
\[
\sum_{j=1}^{N}\sum_{s=1}^{T_I}  \frac{f_{j}}{(1+r)^{\tau_s \cdot  \tfrac{1+2\epsilon}{1+\epsilon}}}  \hat{X}^*_{js} \leq 
\sum_{j=1}^{N}\sum_{t=1}^{T}  \frac{f_{j}}{(1+r)^t} \hat{x}_{jt} \leq  
\sum_{j=1}^{N}\sum_{s=1}^{T_I}  \frac{f_{j}}{(1+r)^{\tau_{s-1}}}  \hat{X}^*_{js}.
\]
So, 
\[ \text{ALG} \geq \sum_{j=1}^{N} \frac{(1+r)^{\tau_{s-1}}}{(1+r)^{\tau_s \cdot  \tfrac{1+2\epsilon}{1+\epsilon}}} \sum_{s=1}^{T_I}\frac{f_{j}}{(1+r)^{\tau_{s-1}}}  \hat{X}^*_{js} \geq \gamma \cdot   \widehat{\textrm{NPV}}(\hat{X}^*),
\]
where
\begin{equation*}
0 < \gamma \leq \min_{s\in\mathcal{T}_I} \left\{\frac{(1+r)^{\tau_{s-1}}}{(1+r)^{\tau_s \cdot  \tfrac{1+2\epsilon}{1+\epsilon}}}\right\}.  \label{eq:boundGamma}
\end{equation*}

Combining the previous expression, we get that 
\[ \gamma \cdot   \widehat{\textrm{NPV}}(\hat{X}^*) \leq \text{ALG} \leq \text{OPT} \leq \widehat{\textrm{NPV}}(\hat{X}^*) \leq \frac{1}{\gamma} \text{ALG},\]
proving that $\text{ALG} \geq \gamma \cdot \text{OPT}$. 

Finally, to compute the best value for $\gamma$, note that 
\[ \gamma \leq \min_s \  (1+r)^{\tau_s \cdot \left(\frac{1}{1+\epsilon} - \frac{1+2\epsilon}{1+\epsilon}\right)}, \]
and this minimum is achieved in the last time period, hence
\[ \gamma \leq (1+r)^{-T\cdot\frac{2\epsilon}{1+\epsilon}}, \]
concluding the proof.
\end{proof}
 
 \begin{remark}
The condition of $f_j\geq 0$ for all jobs $j\in\mathcal{J}$ is required to obtain any approximation factor. In fact, consider a problem with only two jobs $j$ and $j'$, with processing times $p_j$ and $p_{j'}$, where $j$ is a precedence of $j'$. Assume that $f_j<0$ and $f_{j'}=-f_j + \delta$, with a $\delta>0$ small enough such that
\[ \frac{f_{j}}{(1+r)^{p_{j}}}  < - \left( \frac{-f_{j}+\delta}{(1+r)^{p_{j}+p_{j'}}}\right). \]
In this case, the optimal solution to the problem is not to schedule any job, obtaining an objective value of 0. However, for any aggregation parameter~$\epsilon$, it is possible to choose a value for the processing times such that $p_{j}$ and $p_{j}+p_{j'}$ belongs to the same interval $s:=\mathcal{I}(p_j) = \mathcal{I}(p_j+p_{j'})$. Therefore, the optimal solution $X_{js}=X_{j's}=1$  for the aggregated problem has an objective value 
\[\widehat{NPV}(X) =  \frac{f_{j}}{(1+r)^{\tau_{s}}}  + \frac{-f_{j}+\delta}{(1+r)^{\tau_{s-1}}} >\frac{\delta}{(1+r)^{\tau_{s}}} > 0. \]
Hence, the approximation algorithm will schedule both jobs, obtaining a negative objective value for the original problem so that no bounded approximation factor can be obtained for this case. 
Note that this counterexample also applies if we modify the definition of $\widehat{NPV}(X)$ to assign the profits of all jobs to the beginning or the end of the time intervals. Nevertheless, in practice, the quality of the solution provided by the approximation algorithm is good enough even with negative profits, as we show in Section~\ref{sec:comput}.
\end{remark}

To evaluate the quality of this approximation factor, we can rely on the usual discount rates for project evaluation process. The parameter $r$ is the discount rate, which generally goes from $5\%$ to $10\%$ (annual rate), depending on the industry. For a project with a time horizon going from 1 to 5 years, we can evaluate the approximation factor $\gamma$. Note that $\gamma$ does not depend on the granularity of the time periods. If we consider shorter periods (for example, days), we need to use the corresponding daily discount rate $r_{day}=(1+r)^{1/365}-1$, so $(1+r)^T$ will have the same value independently of the length of the individual time periods.

\begin{figure}[htbp]
    \centering
    \includegraphics[width=.48\linewidth]{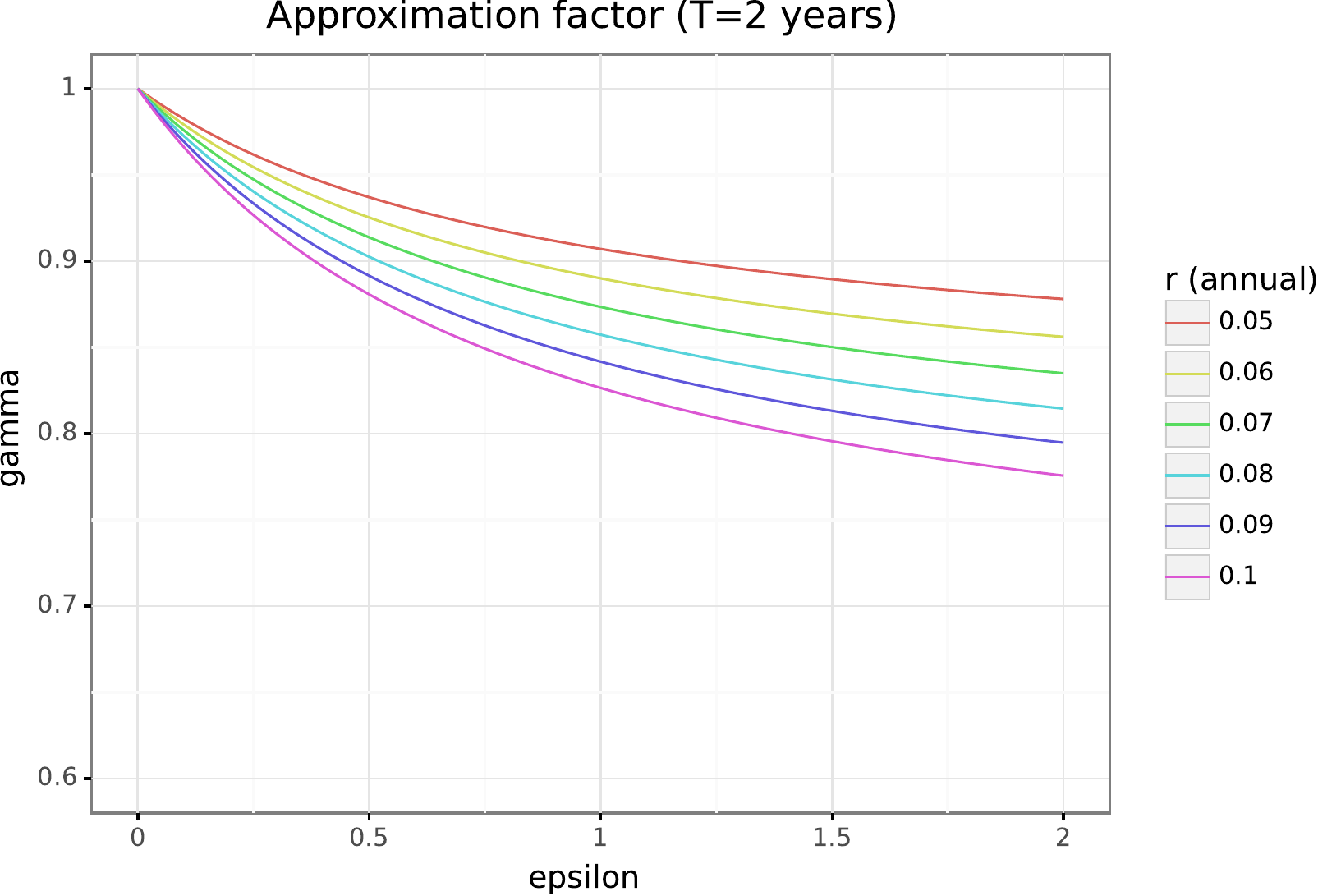} 
    \includegraphics[width=.48\linewidth]{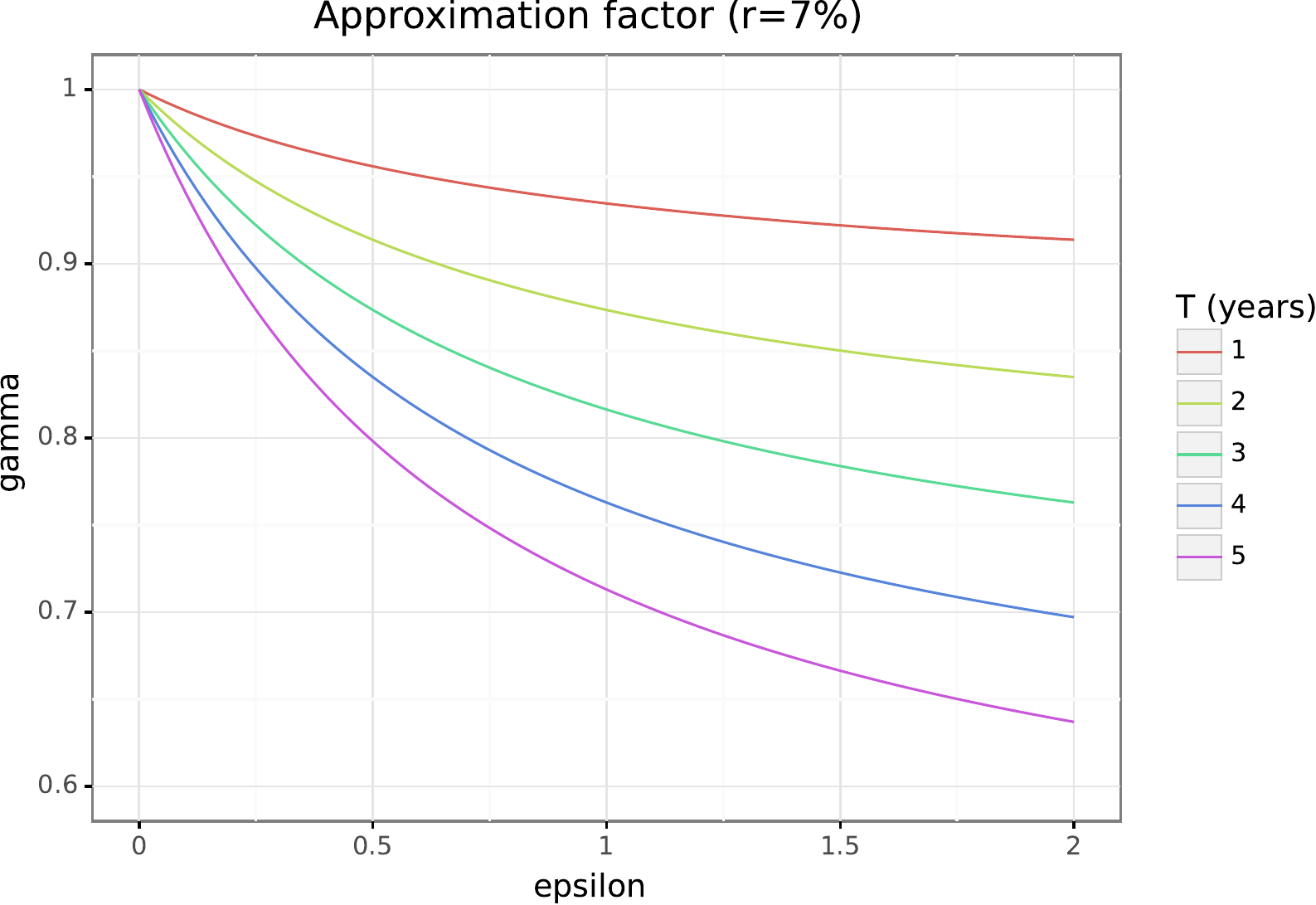}
    \caption{Evaluation of the obtained approximation factor for different time horizons and discount factors}
    \label{fig:gamma}
\end{figure}
 
Figure~\ref{fig:gamma} shows the value of $\gamma$ for different discount rates and horizons of the problem. It can be seen that for a two-year project, even with high discount rates, the algorithm is providing solutions that have an objective value higher than 80\% the value of the optimal solution. This approximation factor is more sensitive to the time horizon of the problem. Nevertheless, a time aggregation with $\epsilon=1$ ensures a 70\% of the optimal value for projects of up to 5 years while reducing the number of periods of the aggregated problem by a logarithmic factor of base 2. 
However, as we will see in Section~\ref{sec:comput}, these theoretical values are a lower bound, and in practice, much better approximations are obtained.

We finish this section by discussing how to extend these ideas to the case of renewable non-cumulative resource constraints. In this classical variant of the RCPSP, the resources available at each time period are not cumulative, so the unused resources at a given period are discarded. In this case, a MIP formulation similar to Problem~\eqref{eq:Orig} can be obtained  by replacing the resource constraints~\eqref{eq:OrigResources} to
\begin{equation}
 \sum_{j=1}^{N} \sum_{u=t}^{t+p_j-1} q_{jk} x_{ju} \leq R_{kt} \qquad  t\in\mathcal{T} , k\in\mathcal{K}. \label{eq:OrigResourcesRenew}
\end{equation}

That is, we consider a resource constraint for each time period $t$ and resource $k$, including only the jobs being processed during time $t$. We consider the total resource consumption during the given interval to construct an equivalent formulation for the time-aggregated Problem~\eqref{eq:Agg}. As before, assuming that $X_{js}=1$ if job $j$ finishes at the end of the interval $I_s$, then we write this constraint as
\[ \sum_{j=1}^{N} \sum_{u=t}^{\mathcal{I}(\tau_t+p_j)-1} q_{jk} \cdot \min\{p_j - (\tau_u-\tau_t),\tau_t-\tau_{t-1}\}\cdot  X_{ju} \leq  \sum_{s=\lceil\tau_{t-1}\rceil+1}^{\lceil\tau_t\rceil}R_{ks}, \]
for each $t\in\mathcal{T}_I$ and $k\in\mathcal{K}$. The left-hand side considers all jobs being processed at interval $u$. For a job $j$ finishing at interval $t$, its resource consumption is $q_{ik} p_j$ if $p_j$ is smaller than the length of the interval, or $q_{jk}(\tau_t-\tau_{t-1})$ if not. Similarly, for jobs finishing at time intervals $u>t$, we consider the fraction of the processing time at time $\tau_u$, which is given by $p_j-(\tau_u-\tau_t)$. For the right-hand side, we consider the sum of the available resources at each particular time period belonging to the time interval but round up its extremes $\tau_t$ and $\tau_{t-1}$. 

In this way, Proposition~\ref{prop:UB} is also true for the case of non-cumulative resources. However, Theorem~\ref{thm:main} cannot be extended to this case. For example, consider the case of a job $j$ requiring $q_{jk}$ resources with $q_{jk}>R_{kt}$ for all $t\in\mathcal{T}$. Hence, this job cannot be scheduled at any time due to constraint~\eqref{eq:OrigResourcesRenew}. Still, the aggregated problem will be able to include this job in a sufficiently long time interval, making it impossible to achieve an approximation guarantee for Algorithm~\ref{alg:sort}. Nevertheless, it is expected that the optimal solution to the problem schedules most jobs as soon as possible, so in practice, the active constraints will be mostly precedence constraints (in which case the resource constraints do not provide an important role) or mostly resource constraints (in which case, consuming all available resources at each time period make the problem equivalent to cumulative resources); so the approximation algorithm will still provide a good solution, as we show in the next section.

\section{Computational experiments} \label{sec:comput}

\subsection{Implementation details}

All optimization models and algorithms were coded on Python v3.9 and solved using Gurobi v9.0.2, with its default parameters. We formulate the MIP model as in Problem~\ref{eq:Agg}, except for the precedence constraints, relying on the solver to eliminate unnecessary variables and reduce the problem's size. 

For the precedence constraints, to include all precedences in $\hat{\mathcal{P}}$ requires $\mathcal{O}(|\mathcal{T}_I|\cdot |\mathcal{J}|^2)$ constraints, which make the problem unnecessarily large. To avoid this problem, we compute the transitive reduction of the precedence graph for each interval. More precisely, we first compute the transitive closure $\hat{\mathcal{P}}$ and compute the longest path between each pair of nodes $\Delta_{ij}$ where a path from $i$ to $j$ in $\hat{\mathcal{P}}$ exists. Then, to construct the precedences constraints for interval $t\in\mathcal{T}_I$, for each job $i$ and for each $j\in\hat{\mathcal{P}}_i$ we compute the interval $\mathcal{I}(\tau_t - \Delta_{ij})$ which is the last interval where job $j$ must be scheduled to be able to schedule job $i$ at time interval $t$. If $\tau_t < \Delta_{ij}$ for some $j\in\hat{\mathcal{P}}_i$, then it means that job $i$ cannot be scheduled in this interval, so we impose that $X_{i,t}=0$ in constraint~\eqref{eq:Aggelim}. In the other case, we assign a \emph{weight} to the arc $(i,j)$ in $\hat{\mathcal{P}}$ equal to $w_{ij}=t-\mathcal{I}(\tau_t - \Delta_{ij})$. Finally, we compute a modified transitive reduction of this graph for removing redundant arcs. That is, recursively, we remove an arc $(i,j)$ if there exists another node $k$ such that $(i,k)$ and $(k,j)$ are in $\hat{\mathcal{P}}$ and $w_{ij} < w_{ik}+w_{kj}$. In other words, we remove the precedence between $i$ and $j$ for time interval $t$ because it is already implied by the precedence between $i$ and $k$ and between $k$ and $j$ for the same interval. This procedure considerably reduces the number of precedence constraints to include in the MIP problem. 

For Algorithm~\ref{alg:sort}, we can use any topological order $\sigma(\mathcal{J})$ to assign the jobs. However, there are many topological orders, each potentially providing a different solution. To obtain better solutions, we prioritize the jobs with higher value $f_i$ in the topological order so that these jobs will be scheduled at earlier time periods. This approach can be implemented using a \emph{min heap queue}, using as the priority of each element the pair $(C^I_j, -f_i)$, where $C^I_j$ is the interval assigned by the optimal solution of the aggregated problem. Initially, we insert all jobs scheduled by the aggregated problem without any precedence into the queue. So, the queue will prioritize the jobs available to be scheduled in the order of $C^I_j$ and with the best profit among all jobs with the same $C^I_j$. Finally, each time a job $i$ is removed from the queue, we revise the jobs with $i$ as precedence to insert them into the queue if all its precedences have already been scheduled.

\subsection{Results for the PSPLib Instances}

The first two datasets come from the classical instances of PSPlib~\citep{kolisch1997psplib}. These datasets contain single-mode RCPSP problems with 30 jobs (dataset J30) with $119\sim 210$ time periods and 120 jobs (dataset J120) with $551\sim 744$ time periods, with a total of 480 and 600 different instances, respectively. All instances have four types of resources. Since these instances do not have a profit for completing a job, we assume $f_j=1$ for all jobs, and a discount rate of $0.1\%$ per period (that is, assuming that a time period is a week, then the equivalent annual discount rate is  $\sim 5.4\%$ and the time horizon is $2\sim 14$ years.)
 
To evaluate the quality of the solutions generated by Algorithm~\ref{alg:sort}, we compute a gap of optimality for a given solution $x$ by comparing its objective value with the upper bound provided by the optimal solution $X^*$ of the aggregated Problem~\ref{eq:Agg} solved to obtain $x$. 
\begin{equation}
    \text{\textsc{Gap}}(x) = \frac{\widehat{\text{NPV}}(X^*) - \text{NPV}(x) }{\text{NPV}(x)}. \label{eq:defGap}
\end{equation}

Table~\ref{tab:res_Psp} shows a summary of results for the J30 and J120 datasets. We solve both versions of resource constraints for these datasets, with cumulative and non-cumulative resources. Columns show the number of instances solved up to optimality for a time limit of 24 hours, the average solving time, and the average solution gap as defined in~\eqref{eq:defGap}. For the solving time, we use only the solving time reported by Gurobi to solve the aggregated MIP model.

\begin{table}[htbp]
    \centering
    \footnotesize
    \setlength{\tabcolsep}{4pt}
    \begin{tabular}{c|c||c|c|c||c|c|c}
   \multicolumn{2}{c||}{} & \multicolumn{3}{c||}{Cumulative resources} & \multicolumn{3}{c}{Non-cumulative resources}\\ \hline
    &  & Instances & Avg. Time & Avg. Gap & Instances & Avg. Time & Avg. Gap \\
   Data & $\epsilon$ & Solved & (sec) & (\%) &  Solved & (sec) & (\%) \\ \hline 
\multirow{4}{*}{J30}
 & 0.1 & 480 & 3.698 & 0.20 & 480 & 1.238 & 0.77\\
 & 0.2 & 480 & 0.161 & 0.34 & 480 & 0.079 & 0.94\\
 & 0.5 & 480 & 0.014 & 0.60 & 480 & 0.008 & 1.19\\
 & 1.0 & 480 & 0.004 & 0.91 & 480 & 0.004 & 1.50\\ \hline
\multirow{4}{*}{J120}
 & 0.1 & 448 & 2479.6 & 0.53 & 568 & 1920.2 & 2.81\\
 & 0.2 & 541 & 1780.1 & 1.21 & 600 &  0.967 & 3.35\\
 & 0.5 & 600 & 280.1  & 2.01 & 600 &  0.079 & 3.75\\
 & 1.0 & 600 & 0.078  & 2.81 & 600 &  0.024 & 4.19\\ 
 \end{tabular}
    \caption{Summary of results for PSPlib instances}
    \label{tab:res_Psp}
\end{table}

It can be seen that increasing the aggregation factor $\epsilon$ considerably reduces the solving time without compromising the quality of the solution. For example, for the J30 dataset, the aggregated problem can be solved in a fraction of a second, with average gaps of less than 1\%. This improvement is more notorious for the larger problems in the J120 dataset. While a value of $\epsilon=0.1$ cannot solve all instances in the 24-hour time limit, larger values of $\epsilon$ considerable reduce the required time, being able to solve all instances in less than a second for $\epsilon=1$, with an average optimality gap of $2.8\%$. Note that for non-cumulative resource constraints, the results are similar, with lower computation times but slightly more significant gaps. 

\begin{figure}[tbp]
    \centering
    \includegraphics[width=.95\linewidth]{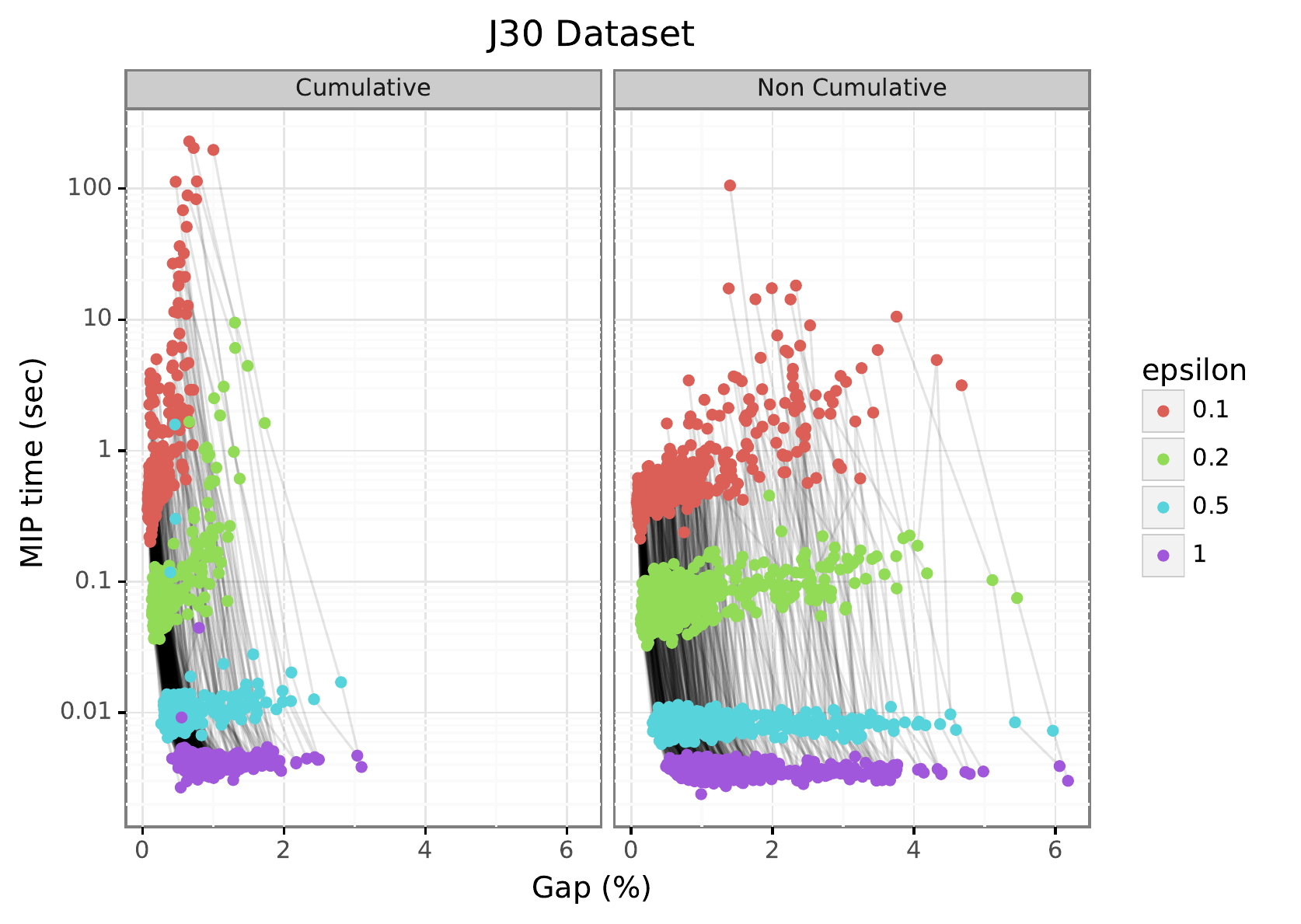}
    \includegraphics[width=.95\linewidth]{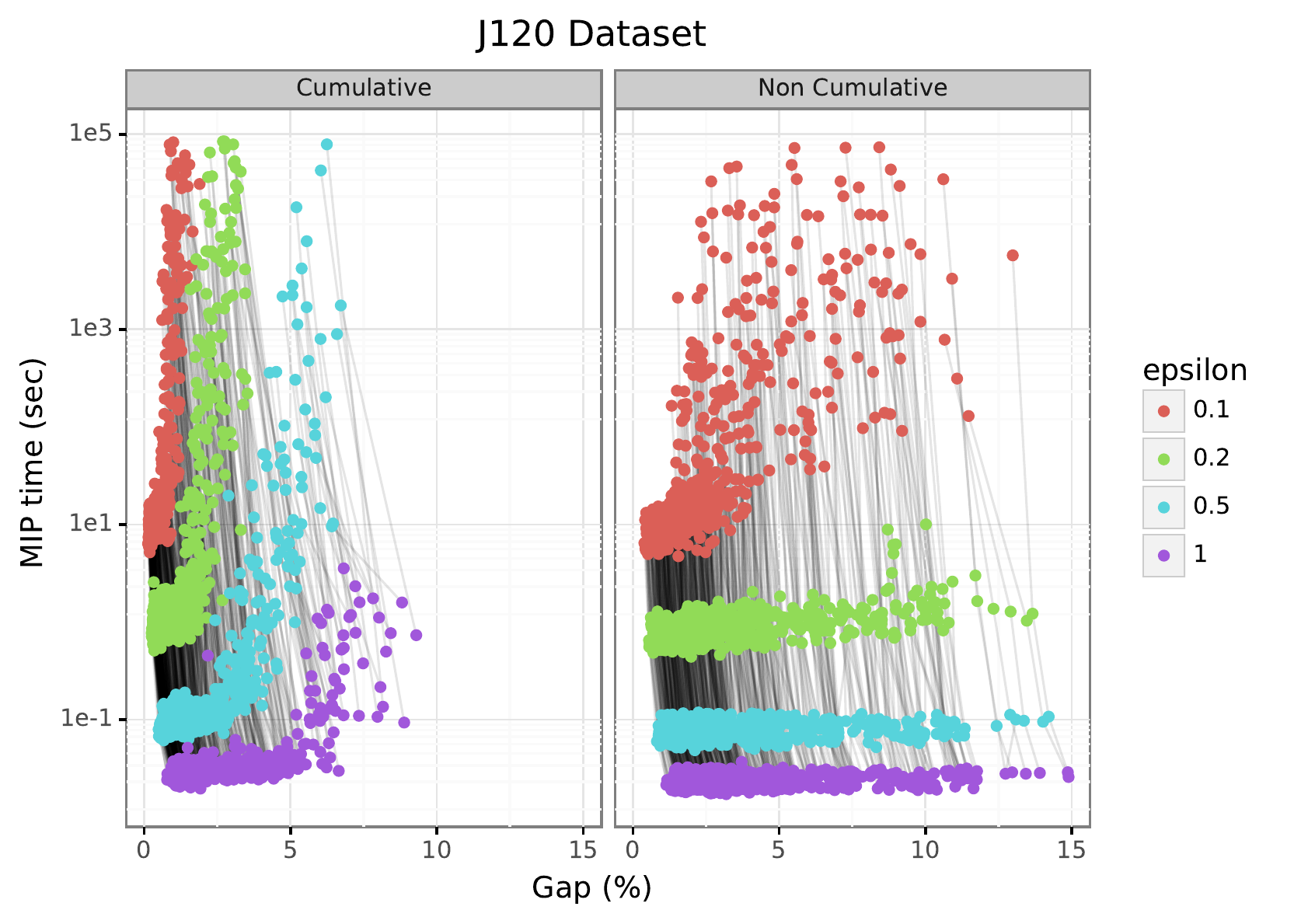}
    \caption{Solving time and resulting optimality gap for each instance with different aggregation parameters}
    \label{fig:res_Psp}
\end{figure}

Figure~\ref{fig:res_Psp} shows a more detailed analysis of each instance in the datasets. The figure shows the resulting gap (x-axis) and computation time (y-axis, in log-scale) of each instance for the different aggregation parameters $\epsilon$, with a thin line connecting the values for the same instances. It can be seen that instances with the most significant gaps for small values of $\epsilon$ remain with the most important gaps for larger values of $\epsilon$, showing consistency between the models. Also, the figure shows a positive correlation between the obtained gap and the solving time, particularly for smaller $\epsilon$. Surprisingly, this correlation is lower for the problem with non-cumulative resource constraints.

\subsection{Results for underground mining instances}

\begin{figure}[tbp]
    \centering
    \includegraphics[width=.8\linewidth]{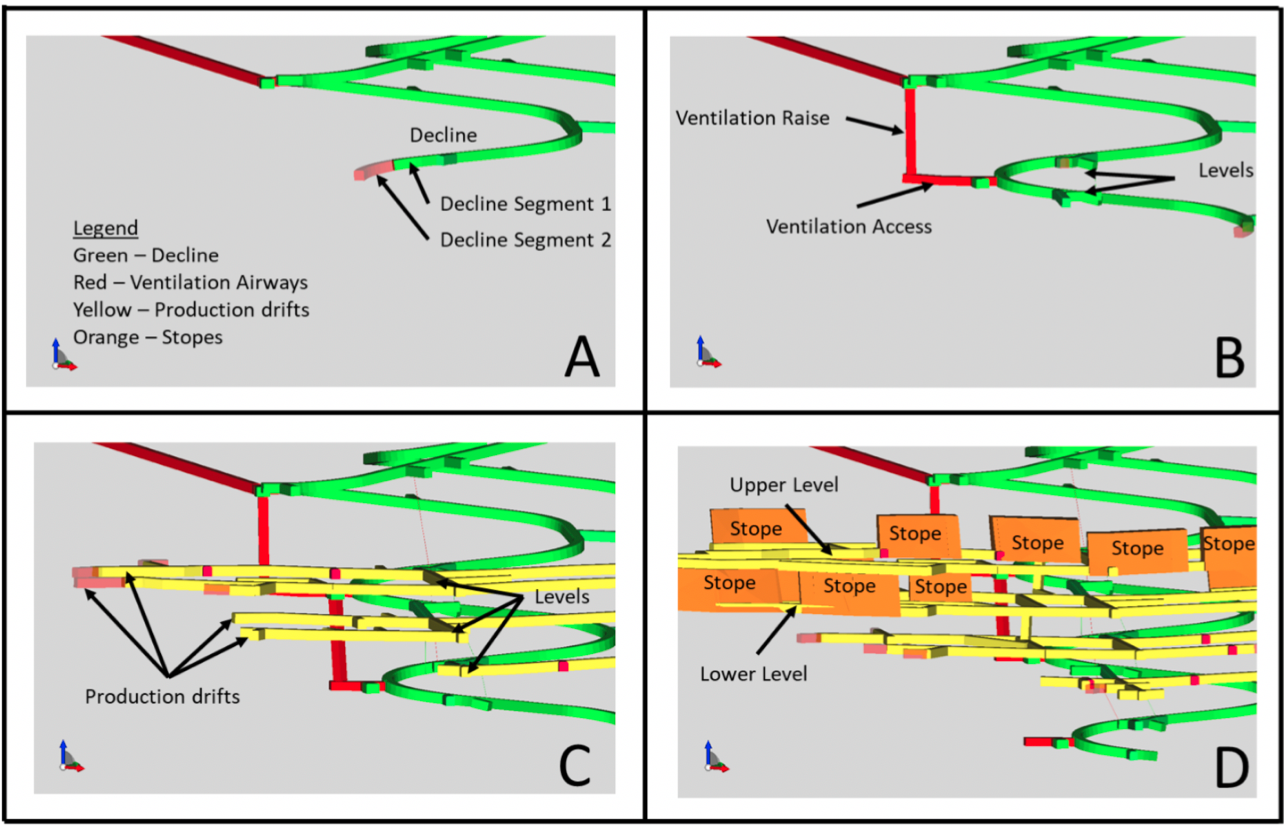}
    \caption{(A) Each design element is divided into activities (jobs), and each activity is associated with other activities through precedence. For example, in Quadrant A, declining segment two can only begin once segment one has been completed. (B) All working areas need to be ventilated in an underground mine to ensure a safe working environment. In Quadrant B, the decline is constrained from advancing until its previous section has been connected to the ventilation circuit through the ventilation access and raise. (C) The mine is divided into different levels used to access extraction areas, i.e., stopes, via production drifts excavated from the main decline. As depicted in Quadrant C, stope extraction at a given level can only begin once the production drifts have been fully completed. (D) In Quadrant D, the stopes on the lower level can only be extracted when stopes directly above them have been extracted to conform to precedences associated with a top-down mining method. Image credits: Akshay Chowdu and Andrea Brickey}
    \label{fig:example_underground}
\end{figure}

The second dataset comes from three real underground strategic mine planning problems. In these instances, the number of jobs is considerably larger than in the PSPLib instances, with thousands of jobs to schedule using a daily time resolution. Each job represents different activities required to extract the ore from the mine, including material extraction, drilling, vertical developments, and backfilling, among others. See Figure~\ref{fig:example_underground} for an example of how these jobs and their precedences are constructed. These jobs have positive profits when representing the extraction of ore, with a value depending on the ore grade of the material. Negative profits, in other cases, reflect the cost of these activities. 

To provide a realistic framework, we search for a daily schedule for the first five years of the mine. To eliminate unnecessary activities, we apply two preprocessing procedures to each instance. First, we compute the transitive closure $\hat{\mathcal{P}}$, removing all jobs requiring more than 1800 time periods to finish. Then, we apply the classical \emph{nested pit} heuristic based on the maximum closure problem: given a weight for each node (job) in $G$, we compute the closure of $G$ with maximum weight. For the weight $w$, we use a scaling parameter $\alpha\in [0,1]$ and we compute its value as $w_i = p_i$ if $p_i\leq 0$ or $w_i=\alpha\cdot p_i$ if $p_i > 0$. In this way, we obtain a smaller problem by diminishing the value of $\alpha$. We apply this heuristic to obtain a problem such that the total resource requirement is less than the availability after 1800 time periods. The discount factor for this experimental study is $0.02\%$, equivalent to $\sim 7.5\%$ annual discount rate.



\begin{table}[tbp]
    \centering
    \footnotesize
    \setlength{\tabcolsep}{4pt}
    \begin{tabular}{c|c|c|c||c|c||c|c}
   \multicolumn{4}{c||}{} & \multicolumn{2}{c||}{Cumulative resources} & \multicolumn{2}{c}{Non-cumulative resources}\\ \hline

    & & &  & Avg. Time & Avg. Gap  & Avg. Time & Avg. Gap \\
   Instance &  $|\mathcal{J}|$ & $|\mathcal{K}|$ & $\epsilon$  & (sec) & (\%)  & (sec) & (\%) \\ \hline 
\multirow{4}{*}{Agricola} & \multirow{4}{*}{2423} & \multirow{4}{*}{4}
& 1.0 & 0.6     & 6.16  & 0.5    & 7.15 \\
&&& 0.5 & 2.7   & 4.13  & 2.2    & 5.18 \\
&&& 0.2 & 21.4  & 2.00  & 16.8   & 3.19 \\
&&& 0.1 & 127.7 & 1.08  & 137.5  & 2.81 \\ \hline
\multirow{4}{*}{Catan}& \multirow{4}{*}{4383} & \multirow{4}{*}{8}
&1.0 & 2.3      & 7.68  & 3.8    & 10.61 \\
&&&0.5 & 16.9   & 6.72  & 15.6   & 9.12 \\
&&&0.2 & 1017.2 & 3.72  & 84.8   & 6.44  \\
&&&0.1 &13251.2 & 2.22  & 682.5  & 6.09  \\ \hline
\multirow{4}{*}{Dominion}& \multirow{4}{*}{8391} & \multirow{4}{*}{7}
& 1.0 & 9.9     &11.08  & 6.3    & 15.81 \\
&&& 0.5 & 106.7 & 8.23  & 22.6   & 13.52 \\
&&& 0.2 & 2349.0& 4.16  & 271.2  & 11.05 \\
&&& 0.1 &63679.3& 2.39  &4738.8  & 9.51 \\ \hline
    \end{tabular}
    \caption{Results for underground mine instances}
    \label{tab:res_Omp}
\end{table}

Table \ref{tab:res_Omp} shows the algorithm's results for the mine instances under different aggregations. First, we observe that when using $\epsilon=1$, it is possible to obtain a solution very quickly, with optimality gaps $\sim 10\%$ for the three instances. By reducing the value of $\epsilon$, we can get solutions closer to the optimum (optimality gap $< 3\%$) but with a computation time that grows exponentially. Note that the resulting gaps in practice are considerably smaller than the bounds provided by the theoretical approximation factor of the algorithm.
  
\begin{figure}[htbp]
    \centering
    \includegraphics[height=4.7cm]{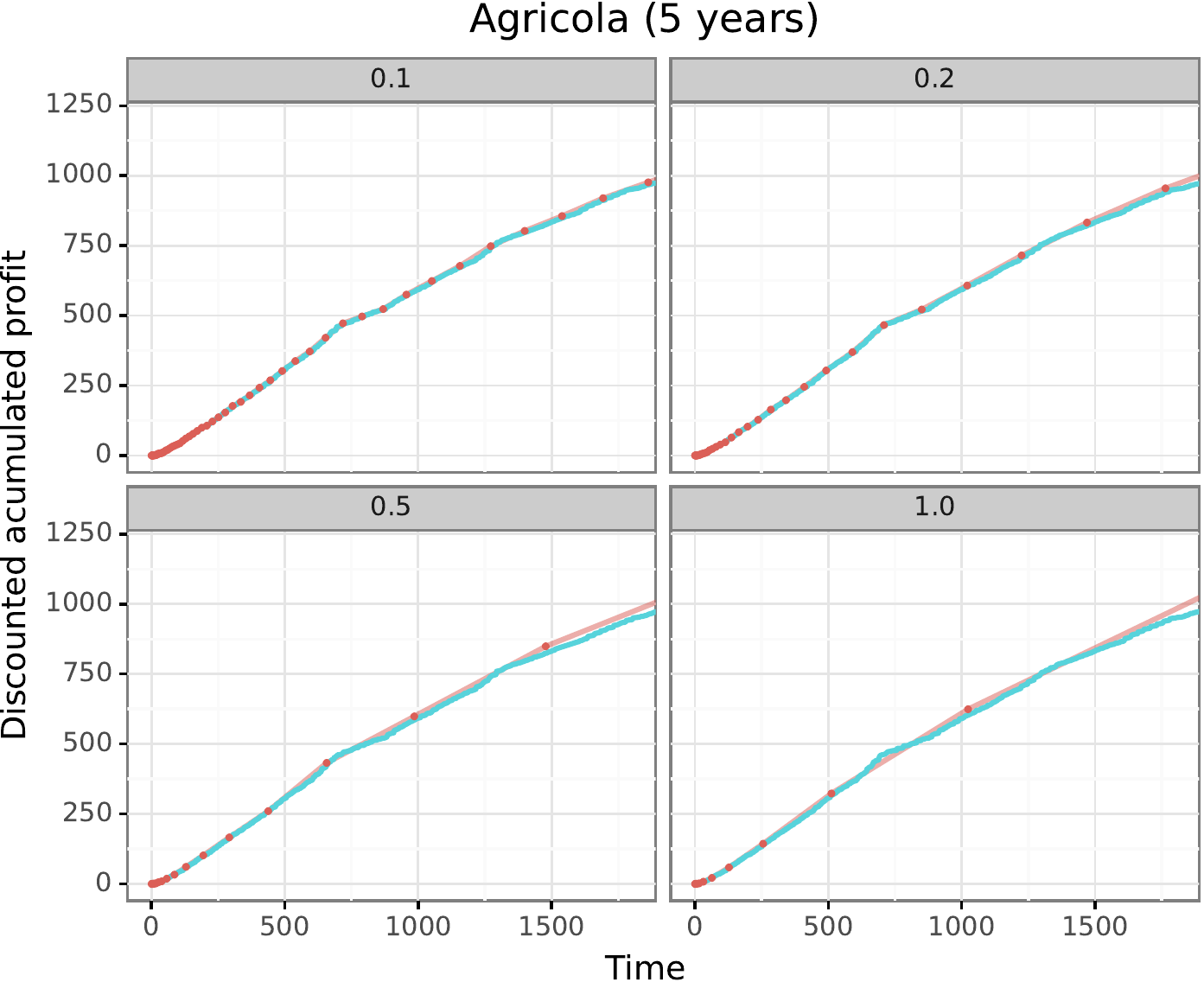}\hfill
    \includegraphics[height=4.7cm]{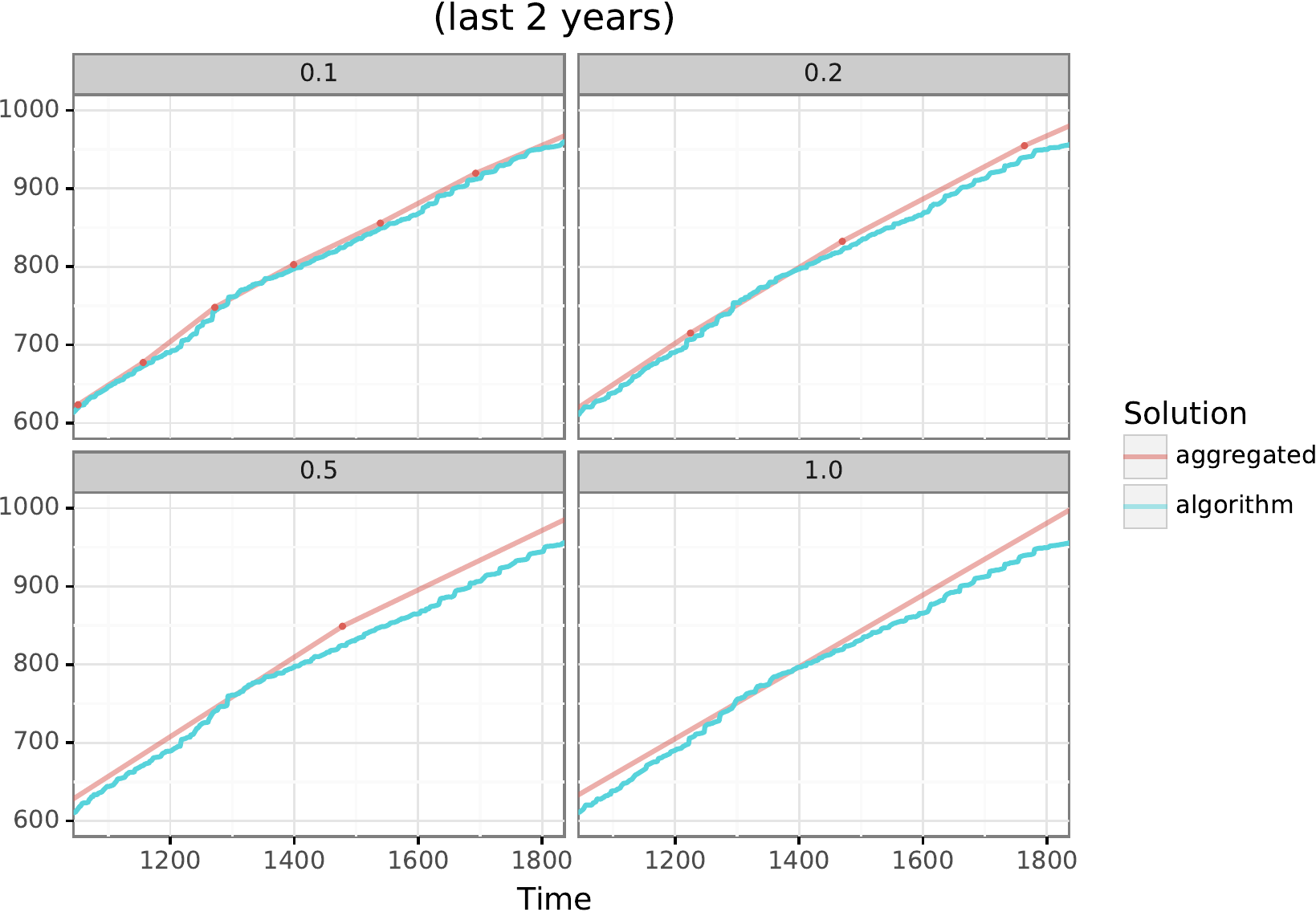}
    \includegraphics[height=4.7cm]{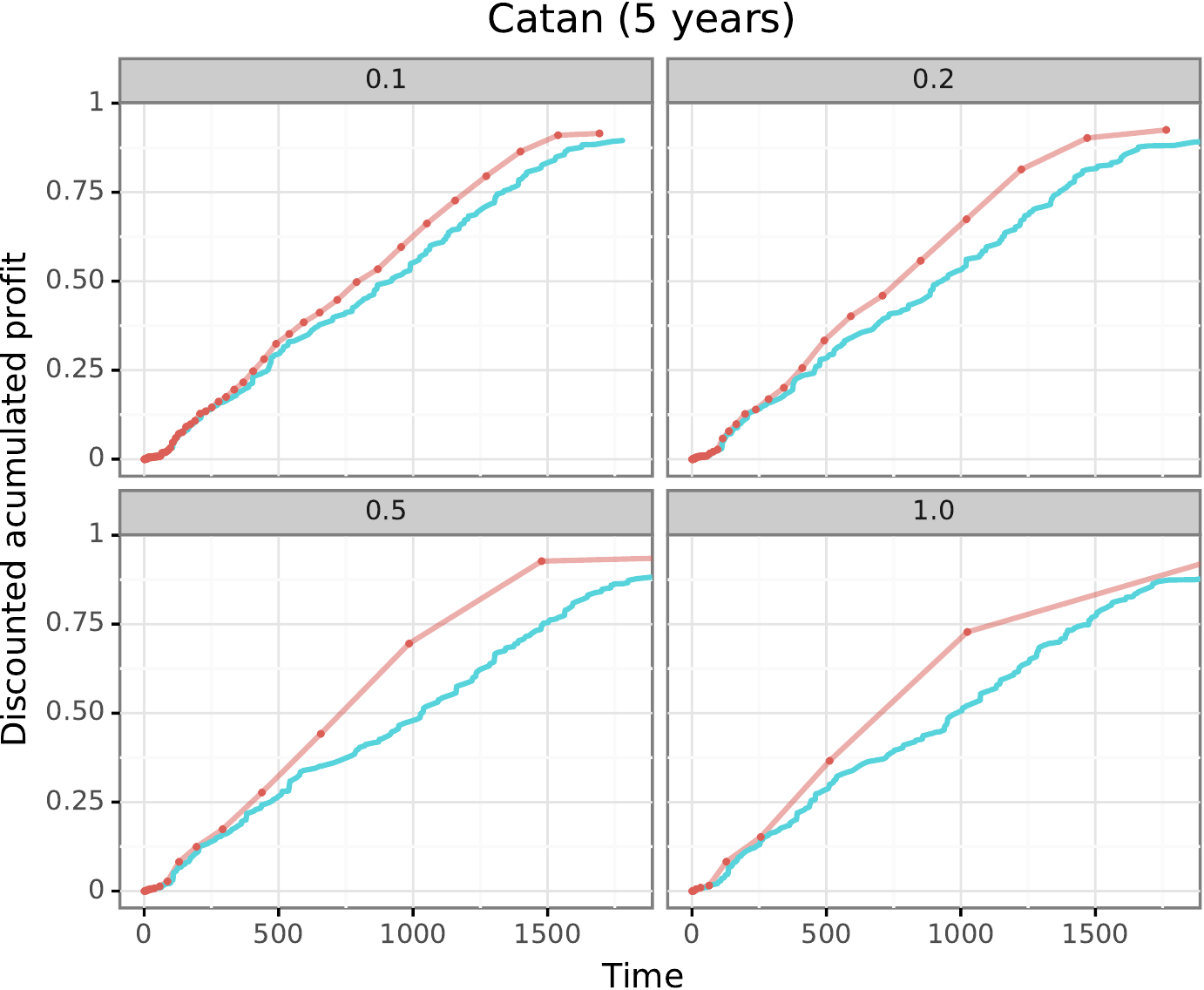}\hfill
    \includegraphics[height=4.7cm]{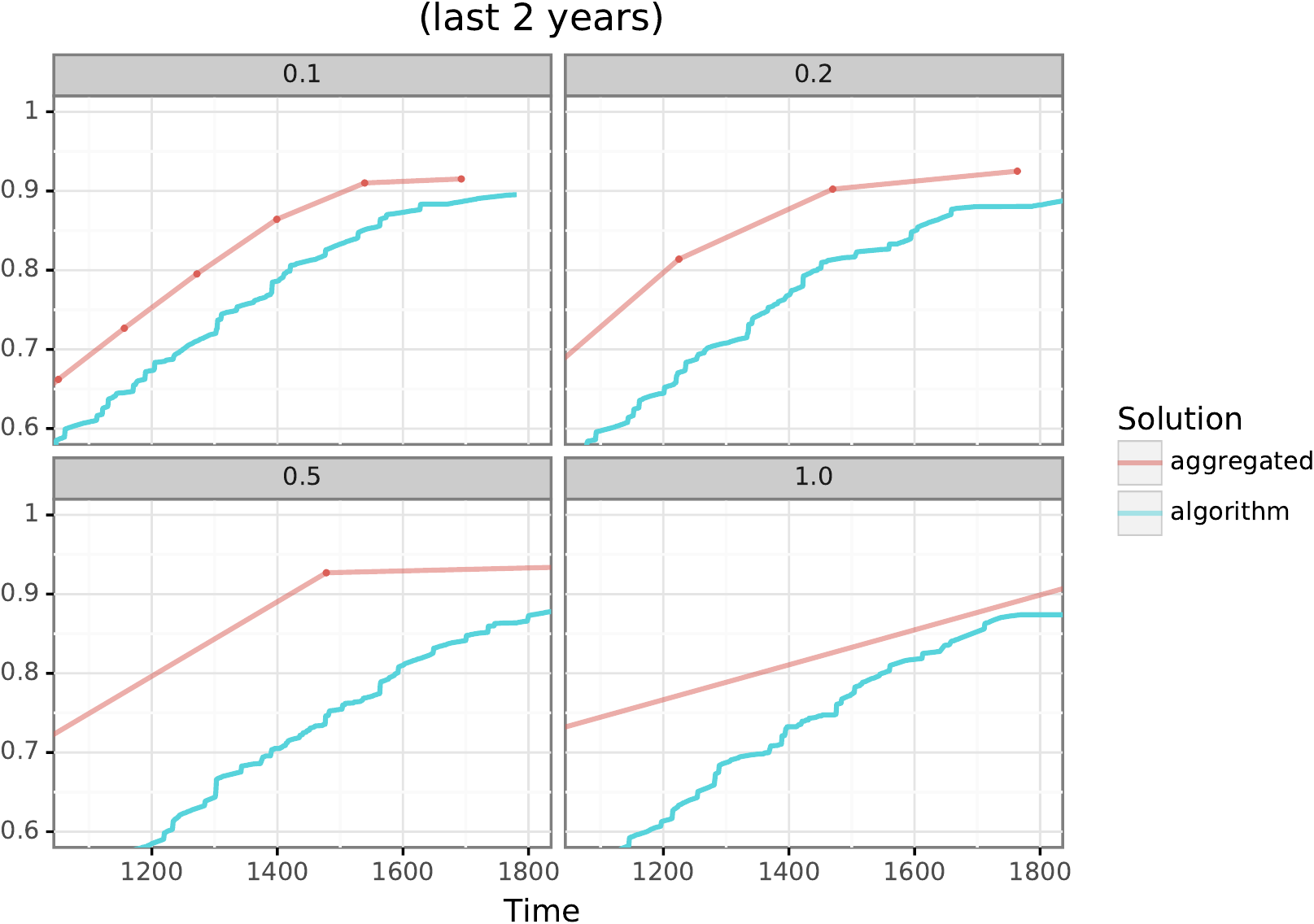}
    \includegraphics[height=4.7cm]{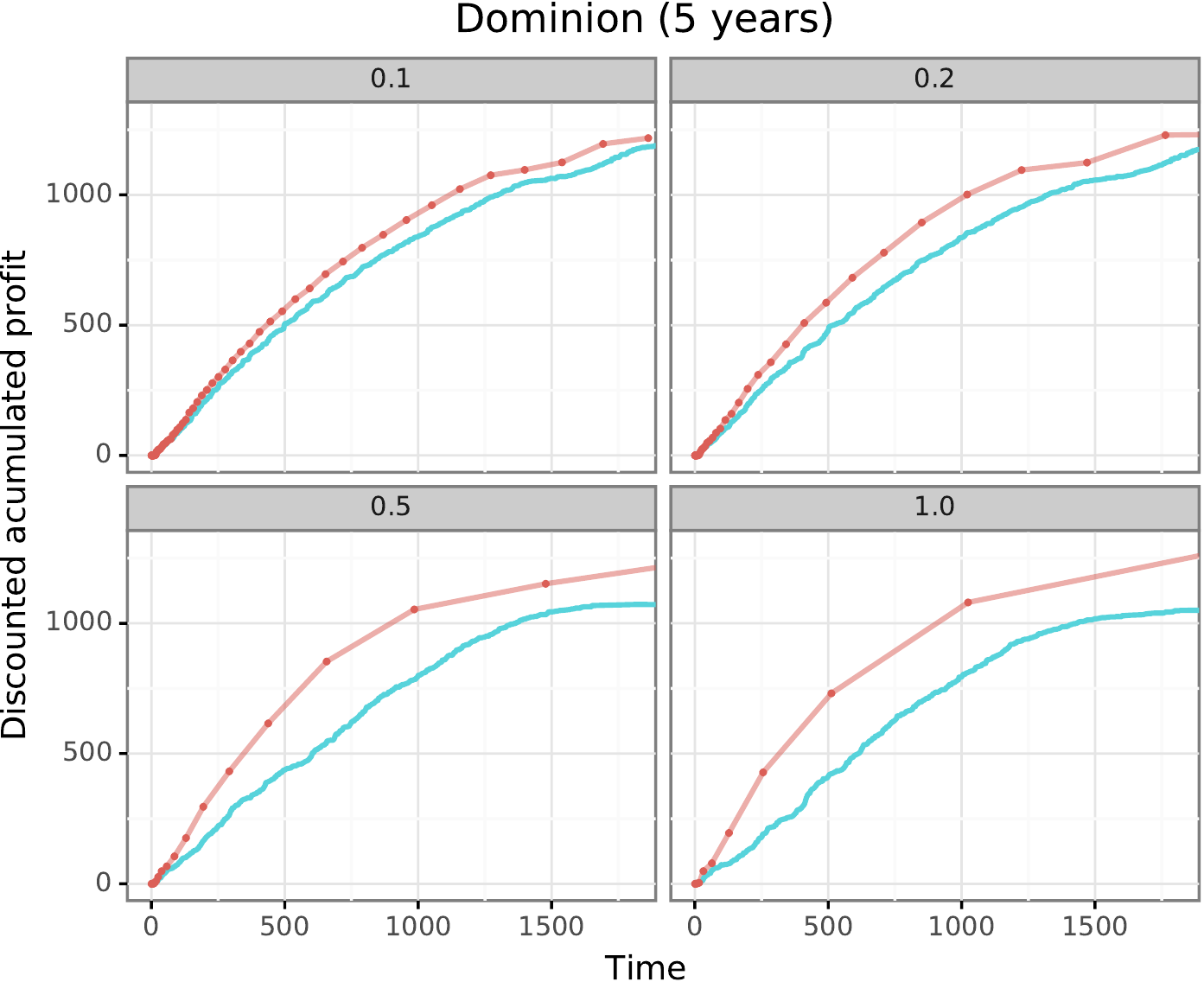}\hfill
    \includegraphics[height=4.7cm]{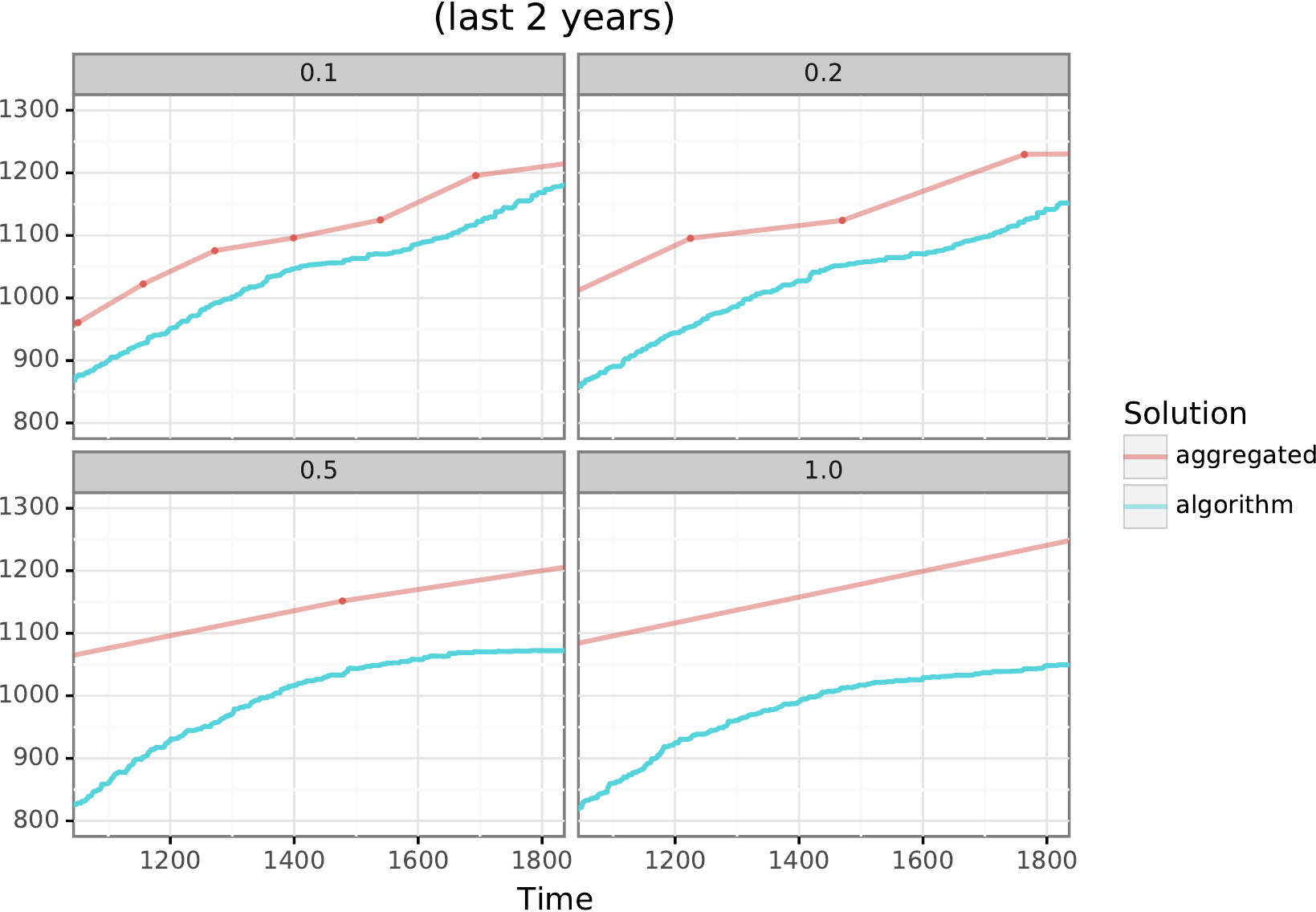}
    \caption{Discounted accumulated profit for different aggregation levels (left: five years, right: last two years)}
    \label{fig:res_Agricola}
\end{figure}

To study the behavior of the algorithm, in Figure~\ref{fig:res_Agricola}  we show the cumulative discounted profit at each time period for the solution obtained ($NPV(x^*)$) and for the aggregated MIP problem ($\widehat{NPV}(X^*)$) for each instance. For the latter case, the line interpolates the objective value at the end of each interval $\tau_s=(1+\epsilon)^s$, shown with small red dots.

It can be seen that the difference between the $NPV(x^*)$ and $\widehat{NPV}(X^*)$ is not produced in the initial periods but at the end of the time horizon. This difference is a desired property because the initial time periods have a more significant impact on the objective function due to the discount factors. Furthermore, this behavior is explained because the length of the time intervals in the initial periods is similar to the original one. For example, for $\epsilon=0.1$, the first 50 periods are divided into 41-time intervals, requiring a total of 79-time intervals to cover the 1800 periods, showing the exponential growth in the size of the intervals. The right figure shows the difference between  $\widehat{NPV}(X^*)$ and $NPV(x^*)$ for the last periods. It can be seen that a smaller aggregation level $\epsilon$ reduces this difference considerably, explaining the better gaps obtained. Interestingly, this reduction of optimality gaps is obtained mainly by a reduction on the upper bound $\widehat{NPV}(X^*)$, but also because this better estimation produces better feasible solutions $x^*$. For example, in Catan, the upper bound diminish a $4.7\%$, and the lower bound is increased by $1.1\%$ comparing $\epsilon=1$ versus $\epsilon=0.1$.

When the algorithm is applied considering the non-cumulative resource constraints, a similar behavior to the PSPlib dataset occurs. The resulting MIP problems are slightly faster to solve, but the obtained gaps are more significant than the cumulative case. This result is expected, as explained at the end of Section~\ref{sec:algo}. Nevertheless, the method allows us to obtain a solution with optimality gaps between $3\sim 9 \%$ in a few hours, which is adequately good in practice for problems of these sizes. Moreover, these more significant gaps are likely explained by a bad upper bound provided by $\widehat{NPV}(X^*)$. For example, using the BZ algorithm, we were able to solve the LP relaxation of Problem~\ref{eq:Orig} (replacing the resource constraint by~\eqref{eq:OrigResourcesRenew}) for the Dominion instance, which provide a different upper bound for the problem. This new bound required 19 hours of computation, and it shows that the solution obtained by our algorithm has an optimality gap of less than $5.6\%$ instead of the previously computed $9.5\%$ for this instance.

\section{Conclusions}
The RCPSP is a central problem in many industries and applications, many of which imply considerably large instances. This statement is particularly true in mining, where the problems have tens of thousands of tasks. Furthermore, sometimes the time resolution required is in the order of weeks, making problems prohibitively large for many current techniques.

Our work addresses this shortcoming by developing a new modeling procedure relevant to the RCPSP-DC case. The key insight is that scheduling errors in the first time periods have a much more significant impact on the final value of the solution than those made at the end of the program. Hence, if a decrease in time resolution is needed to handle larger instances, making it at the end of the program would affect the final solution less. Following this idea, we model the RCPSP-DC with geometrically increasing intervals, which gives us a high resolution at the beginning, but reduces it at the end to limit the size of the optimization problem, similarly to the cost reduction effect due to the net present value in the objective function. Furthermore, our approach allowed us to analyze the resulting approximation algorithm, proving that it has a bounded performance guarantee, which depends on the time horizon and the discount factor of the problem but not on its time granularity. The methodology also gives us a new tuning parameter for the approximation algorithm, allowing us to balance the optimality gap and the time required to compute a solution. To our knowledge, this is the first approximation algorithm developed for RCPSP-DC.

We also show that our approximation algorithm performs much better in practice than the computed theoretical bounds through experimental analysis on several different instances. Furthermore, we show that the proposed algorithm can handle real-life mining instances with tens of thousands of jobs and weekly resolutions. Finally, we remark that in our computational experiments, we are solving Problem~\ref{eq:Agg} using a standard MIP solver. However, for even larger problems or smaller values of $\epsilon$, we can utilize additional techniques to extend the usefulness of this methodology. For example, it is possible to solve the LP relaxation of Problem~\ref{eq:Agg} using decomposition algorithms and to apply Algorithm~\ref{alg:sort} considering the order for each job provided by the $\alpha$-intervals technique as explained in~\citep{Carrasco2018:rcasEjor}.

It is worth exploring how to use these modeling and analysis techniques for other RCPSP settings as an open future problem. Although the key idea of our approach is not that relevant when makespan is used in the cost function, it is still true that not the exact resolution is required for the whole schedule. Hence, using similar ideas of time-aggregation could result in new schemes to solve large RCPSP instances with bounded approximation ratios.

\appendix
\section{``By'' formulation for RCPSP-DC}\label{sec:by}
 
The ``By''-formulation of Problem~\ref{eq:Agg} can be obtained by replacing variables $X$ by a new variable $Y$ such that $Y_{jt}=\sum_{s\leq t} Y_{jt}$, or equivalently $X_{jt}=Y_{jt}-Y_{j,t-1}$ assuming $Y_{j0}=0$. With this change of variables, constraints~\eqref{eq:Aggelim}-\eqref{eq:Aggprec_at} can be rewritten as
\begin{align*}
Y_{jt} &= 0 &  t < \mathcal{I}(p_j),  j\in\mathcal{J} \\
    Y_{j,t-1} &\leq Y_{jt}  &  j\in\mathcal{J}, t\in\mathcal{T}_I \\
   Y_{jt} &\leq Y_{j,\mathcal{I}(\tau_{t}-\Delta_{jk})} &  k\in\hat{\mathcal{P}}_j, j\in\mathcal{J}, t\in\mathcal{T}_I \\
    Y_{jt} &\in \{0,1\} &  j\in\mathcal{J},  t\in\mathcal{T}_I 
\end{align*}
and the resource constraints as
\[     \sum_{j=1}^{N} \sum_{u=1}^{\mathcal{I}(\tau_t+p_j)-1} q_{jk}  (p_j - (\tau_u-\tau_t)^+)  \left(Y_{ju}-Y_{j,u-1}\right) \leq  \sum_{s=1}^{\lceil\tau_t\rceil}R_{ks} \quad t\in\mathcal{T}_I , k\in\mathcal{K} \]
and the objective function
\[\max \sum_{\substack{j=1\\f_j>0}}^{N}\sum_{s=1}^{T_I}  \frac{f_{j}}{(1+r)^{\tau_{s-1}}}  \left(Y_{js}-Y_{j,s-1}\right) 
+ \sum_{\substack{j=1\\f_j<0}}^{N}\sum_{s=1}^{T_I}  \frac{f_{j}}{(1+r)^{\tau_{s}}} \left(Y_{js}-Y_{j,s-1}\right).\]

As result, the model has $\mathcal{O}(|\mathcal{T}_I|\cdot |\mathcal{J}|\cdot \max_{j\in J} |\hat{\mathcal{P}_j}|)$ precedence constraints of the form $Y_{jt}\leq Y_{ks}$, but only $\mathcal{O}(|\mathcal{T}_I|\cdot |\mathcal{K}|)$ additional resource constraints. This structure allows us to apply a decomposition method, like Lagrangian relaxation or the BZ algorithm. The latter technique penalizes the resource constraints on the objective function (similarly to Lagrangian relaxation) for obtaining a maximum closure problem that can be solved efficiently using max-flow algorithms, allowing it to solve its LP relaxation efficiently.

\bibliographystyle{abbrvnat}
\bibliography{rcpsp}

\end{document}